\newtheorem{definition}{Definition}
\newtheorem{theorem}{Theorem}
\newtheorem{lemma}{Lemma}
\theoremstyle{remark}
     \title{Boundary behaviour of potential-type integrals for the multi-term time-fractional diffusion equation}
   \author{
   \vspace{0.3em}
  Karolina Pawlak \\
  \vspace{0.3em} 
  \small Military University of Technology, Faculty of Cybernetics \\
  \vspace{0.3em}
  \small ul. gen. Sylwestra Kaliskiego 2, 00-908 Warsaw 46, Poland \\
  \vspace{0.3em}
  \small E-mail address: karolina.pawlak@wat.edu.pl
}
    \date{}
\renewcommand{\maketitle}{
  \begin{center}
    {\LARGE\bfseries \@title \par}
    \vskip 1em
    {\large
      \lineskip .5em
      \begin{tabular}[t]{c}
        \@author
      \end{tabular}\par}
    \vskip 1em
  \end{center}
}
\begin{document}
 \maketitle
     
     \begin{abstract}
 This paper investigates the boundary behaviour of potential-type integrals for the multi-term time-fractional diffusion equation (MTFDE) across the moving boundary. First, we establish the jump relation for the integral operator associated with the fundamental solution of the inhomogeneous MTFDE. Second, we prove the continuity of the integral operator generated by the kernel corresponding to the homogeneous MTFDE. Krasnoschok obtained similar results for the time-fractional diffusion equation. However, in the multi-term case, the fundamental solution has more complex structure and does not admit standard scaling properties, which requires a different approach. Our results are essential for the analysis of boundary integral equations related to the MTFDE in time-dependent domains. 
     \end{abstract}

     \noindent\textbf{Keywords:} double-layer potential, multi-term time-fractional diffusion, potential theory, single-layer potential, boundary integral equation

     \medskip

\noindent\textbf{AMS subject classifications (2020):} 35R11; 47G40; 35R37.

\section{Introduction}

In this paper, we study the behaviour of potential-type integrals for the multi-term time-fractional diffusion equation across the moving boundary $x=s(t)$. Layer potential methods play a~fundamental role in the analysis of both classical and fractional differential equations. In \citep[Chapter 1.4 and 5.3]{friedman2}, Friedman discussed the second initial-boundary value problem in domains with Lyapunov boundaries, showing how it can be reduced, with the aid of potentials, to an integral equation. Fabes et al. \cite{fabes} used the method of potentials to construct solutions to Laplace's equation with Dirichlet or Neumann boundary conditions in domains with $C^1$ boundaries. Similar results were obtained for the heat equation in cylindrical domains with $C^1$ boundaries in \cite{fabes2}, and for Lipschitz domains in \cite{brown}.

Kemppainen and Ruotsalainen \cite{kemp7} derived the boundary integral solution of the time-fractional diffusion equation (TFDE) using the single-layer potential. They showed the bijectivity of this operator in a certain range of anisotropic Sobolev spaces. Kemppainen also extended the classical results of Fabes and Brown to TFDE in bounded domains with Lyapunov boundaries \cite{kemp3}, and further to Lipschitz domains \cite{kemp4}. Furthermore, using the boundary integral equation method, he proved the existence and uniqueness of the solution to TFDE on a bounded domain with the Lyapunov boundary and with the Dirichlet or Robin boundary conditions in \citep{kemp5, kemp6}. 

The motivation for writing this paper comes from the work of Avner Friedmann \citep[Chapter 8]{friedman2} and \cite{friedman3}, who proved a~stronger form of a~jump relation across the moving boundary $x=s(t)$ for the potential-type integral corresponding to the heat kernel. The original jump relation was stated earlier by Kolodner \cite{kolodner}. This result was crucial in reducing the classical one dimensional one-phase Stefan problem to an equivalent integral equation for the free boundary. The Stefan problem itself concerns the temperature distribution in a melting solid and the evolution of the moving phase interface $s(t)$. In the one-dimensional one-phase case it can be written as
\begin{equation}\label{stef1}
u_t(x,t) - u_{xx}(x,t) = 0, \qquad 0 < x < s(t), \; 0 < t \leq T,
\end{equation}
\begin{equation}\label{stef2}
u(0,t) = f(t) > 0, \qquad u(s(t),t) = 0, \qquad 0 \leq t \leq T,
\end{equation}
\begin{equation}\label{stef3}
u(x,0) = \varphi(x), \qquad 0 \leq x \leq b, \qquad s(0) = b,
\end{equation}
\begin{equation}\label{stef4}
\dot{s}(t) = -u_x(s(t),t), \qquad 0 < t \leq T,
\end{equation}
where $u$ is the temperature and $s(t)$ describes the motion of the free boundary $x=s(t)$. The reduction of this problem to an integral equation for $\dot{s}(t)$ is based on three steps. 
First, one derives an integral representation  
\[
u(x,t) = \mathcal{U}\!\left(\varphi, f, s, u_x(s(\cdot), \cdot)\right)(x,t)
\]  
in terms of potential-type integrals. This representation naturally decomposes into three parts: the first term reflects the influence of the initial condition, the second term corresponds to the boundary condition at the fixed boundary \(x=0\), and the third term accounts for the Stefan boundary condition \eqref{stef4} at the moving interface \(x=s(t)\). In the next step, one applies the differential operator \(\tfrac{\partial}{\partial x}\) to this representation  to obtain formulas for the spatial derivative \(u_x(x,t)\) expressed in terms of differentiated potential-type integrals. These formulas are not yet equations for \(\dot{s}(t)\): the free boundary velocity emerges only after taking the boundary limit \(x\to s(t)\) and substituting the Stefan condition \eqref{stef4}. Hence the crucial step is the passage to the limit and the identification of the limiting terms, which is exactly where the jump results are used. Finally, we obtain an integral equation for the free boundary
\begin{equation}\label{inteq}
    v(t) =  \mathcal{U}_x\nak{\varphi, f, s, v}(s(t),t), \quad \text{where} \ s(t) = b - \intzt v(s) ds.
\end{equation}
In order to show that the integral equation \eqref{inteq} is equivalent to Stefan problem, one must verify that $s(t)$ given by \eqref{inteq} and $\mathcal{U}\!\left(\varphi, f, s, v\right)(x,t)$ satisfy \eqref{stef1} - \eqref{stef4}.

Krasnoschok generalised this result to the case of the time-fractional diffusion equation. In \cite{MykolaU}  he established the jump relation for the potential-type integral associated with the fundamental solution of inhomogeneous TFDE across the moving boundary $x=s(t)$. Moreover, in \cite{Mykola0} he proved the continuity of the integral operator generated by the kernel related to the initial condition,  across the moving boundary $x=s(t)$. As far as we know, no previous research has investigated the boundary behaviour of layer potentials for the multi-term time-fractional diffusion equation across the moving boundary. We are going to fill this gap.

Multi-term time fractional diffusion equations are used to model complex physical phenomena where a~single-order diffusion equation is insufficient. For instance,  
in the modelling of chloride sub-diffusion in reinforced concrete, Chen et al.~\citep{chen2020multi} demonstrated that a multi-term model provides numerical results that agree better with the real data than other approaches. Moreover, multi-term FDEs have been studied in modelling oxygen delivery through a
capillary to tissues \citep{srivastava}. Finally, for computational modelling, high-order numerical schemes, finite element and finite difference methods are developed for multi-term time-fractional diffusion and diffusion-wave equations, showing their utility in engineering, hydrology, and physics \citep{li,liu}.

Let us now turn to the formulation of the main result of this work. Let $\al_k \in (0,1)$, $\lambda_k > 0$ for $k=1, \ldots, m$ and $\al_1 < \al_2 < \ldots < \al_m$. Kernels $E(x,t)$ and $Z(x,t)$ are fundamental solutions to the following problem
\begin{equation}\label{maine1}
    \sum_{k=1}^{m} \lambda_k D^{\al_k} u(x,t) - u_{xx}(x,t) = f(x,t) , \quad x \in \rr, \ t \in (0,T)
\end{equation}
with the initial condition
\begin{equation}\label{maine2}
    u(x,0) = u_0(x), \quad x \in \rr
\end{equation}
in the sense that
\begin{align*}
    u(x,t) = \intzt \int_{\rr} E(x-y,t-\tau) f(y,\tau) dy d\tau + \int_{\rr} Z(x-y,t) u_0(y) dy .
\end{align*}
The full definition of kernels $E$ and $Z$ and the formulation of the existence theorem will be given in the Preliminaries (see Theorem \ref{fsol}). In this paper, we prove that the potential-type integral corresponding to the kernel $E$ satisfies the following jump relation across the boundary $x=s(t)$.

\begin{theorem}\label{lemma1}
Let $t^{1-\alm}\varphi(t) \in C[0,T]$ and we assume that there exists $\beta>\frac{\alm}{2}$ such that $|s(t)- s(\tau)|\leq |t- \tau|^{\beta}$ for $t, \tau \in [0, T]$. Then for each $t\in (0,T]$ there holds
\begin{align}\label{qwa}
        \lim_{x\rightarrow s(t)^{\mp}} \frac{\partial }{\partial x }  \intzt \varphi(\tau) E(x-s(\tau),t-\tau) \dta =  \pm \jd \varphi(t) + \intzt  \varphi(\tau) E_x(s(t)-s(\tau),t-\tau)\dta .
\end{align}
\end{theorem}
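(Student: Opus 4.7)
My plan is to adapt the classical decomposition scheme for double-layer jump relations---used by Friedman for the heat equation and by Krasnoschok for the single-term fractional case---to the present non-self-similar setting. For $x \neq s(\tau)$ I first justify differentiation under the integral sign using the size estimates on $E_x$ supplied by the Preliminaries together with the growth bound $|\varphi(\tau)| \leq C \tau^{\alm - 1}$ implied by $t^{1-\alm}\varphi(t) \in C[0,T]$. The task then reduces to computing the one-sided limits of $\intzt \varphi(\tau) E_x(x-s(\tau), t-\tau) \dta$, which I decompose as $\varphi(t)\,J_1(x,t) + J_2(x,t)$ with
\[
J_1(x,t) = \intzt E_x(x-s(\tau),t-\tau)\dta, \qquad J_2(x,t) = \intzt \bigl[\varphi(\tau)-\varphi(t)\bigr] E_x(x-s(\tau),t-\tau)\dta.
\]
A further split $J_1 = K_1 + K_2$, obtained by adding and subtracting $E_x(x-s(t),t-\tau)$, isolates the characteristic integral $K_2(x,t) = \intzt E_x(x-s(t), t-\tau)\dta$, which should carry the jump, while $K_1$ and $J_2$ should contribute only continuous limits.

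For $K_2$ the obstacle flagged in the introduction bites: since $E$ does not admit a clean similarity transformation, Krasnoschok's direct rescaling is unavailable. My approach is the short-time comparison $E(y,u) = E_{\alm}(y,u) + R(y,u)$, where $E_{\alm}$ is the fundamental solution of the single-term equation $\lambda_m D^{\alm} u - u_{xx} = 0$ (which obeys self-similarity $E_{\alm}(y,u) = u^{-\alm/2}F(|y|\,u^{-\alm/2})$) and $R$ is a lower-order remainder, to be extracted from the contour representation of $\hat E$ in Laplace variables via the large-$s$ asymptotics $\sum_k \lambda_k s^{\al_k} \sim \lambda_m s^{\alm}$. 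Then $K_2 = \intzt \partial_x E_{\alm}(x-s(t),t-\tau)\dta + \intzt R_x(x-s(t),t-\tau)\dta$; the first summand produces the jump by Krasnoschok's self-similar computation,
\[
\lim_{x \to s(t)^{\mp}} \intzt \partial_x E_{\alm}(x-s(t),t-\tau)\dta = \pm \jd,
\]
while for the second I aim at an improved bound of the form $|R_x(y,u)| \leq C u^{-\alm/2+\delta}\psi(|y|\,u^{-\alm/2})$ for some $\delta>0$ and rapidly decaying $\psi$, which renders $\intzt R_x(x-s(t),t-\tau)\dta$ continuous across $x=s(t)$ with limit zero by oddness of $R_x$ in its first argument.

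Continuity of $K_1$ and $J_2$ I would establish by dominated convergence. For $K_1$ the mean value theorem gives $|E_x(x-s(\tau),t-\tau) - E_x(x-s(t),t-\tau)| \leq |s(t)-s(\tau)|\sup|E_{xx}|$; combined with $|s(t)-s(\tau)| \leq |t-\tau|^{\beta}$ and the standard bound on $E_{xx}$ from the Preliminaries, the integrand is dominated by a function of $|t-\tau|$ whose integrability on $(0,t)$ is ensured precisely by the hypothesis $\beta > \alm/2$, and the limit $K_1(x,t) \to \intzt E_x(s(t)-s(\tau),t-\tau)\dta$ then follows upon noting that $E_x(0,t-\tau) = 0$. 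The piece $J_2$ is treated analogously, the modulus of continuity of $\tau^{1-\alm}\varphi(\tau)$ providing the pointwise dominating factor. Assembling the three limits recovers \eqref{qwa}. I expect the hardest step to be making the decomposition $E = E_{\alm} + R$ quantitative, with sharp enough bounds on $R_x$ near the singular diagonal $(0,0)$: this is the point at which the multi-term structure prevents a direct transposition of the single-term argument, and it is where the bulk of the technical work should lie.
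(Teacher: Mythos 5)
Your overall decomposition ($\varphi(t)J_1+J_2$, then $J_1=K_1+K_2$ with $K_2=\intzt E_x(x-\st,t-\tau)d\tau$ carrying the jump) is sound and assembles to the right identity, and your treatment of $K_1$ via the mean value theorem, \eqref{ls3} and $\beta>\frac{\alm}{2}$ is essentially the paper's estimate of its $M_1$-type term. The genuine gap is that the one step on which the whole theorem hinges --- the value $\pm\jd$ of $\lim_{x\to\st^{\mp}}K_2$ --- is not proved: it rests on a conjectured decomposition $E=E_{\alm}+R$ with an improved bound $|R_x(y,u)|\le Cu^{-\alm/2+\delta}\psi\left(|y|u^{-\alm/2}\right)$ that you only announce. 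None of the estimates available for the multi-term kernel (Lemma \ref{lemest} or \eqref{ls1}--\eqref{ls3}) yield such a gain of a positive power of $u$ uniformly in the similarity variable with Gaussian-type decay, and extracting it from the large-$s$ asymptotics $\sum_k\lambda_k s^{\al_k}\sim\lambda_m s^{\alm}$ would require new, delicate contour estimates valid jointly near the diagonal $(y,u)\to(0,0)$; as you yourself note, this is where the bulk of the work lies, so the proposal stops short of a proof precisely at the jump computation. The paper avoids any comparison with the single-term kernel: it computes $\int_{t-\delta}^{t}E_x(x-\st,t-\tau)d\tau$ exactly from the subordination representation \eqref{fmt}--\eqref{smim1}, interchanging the $p$- and $\tau$-integrations, rescaling $p\mapsto z=p/|x-\st|^2$, and proving by induction on $m$ that the time integral of $S^{0}_m$ equals an iterated integral of Wright functions over intervals $(\delta_i^m,\infty)$ whose lower limits tend to $0$ as $x\to\st$; the identities \eqref{qwj}, \eqref{qwl} and positivity \eqref{qwl1} then give the limit $\mp\jd$ exactly, with no asymptotic matching needed.

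A second, smaller but real gap is your treatment of $J_2=\intzt[\varphi(\tau)-\varphi(t)]E_x(x-s(\tau),t-\tau)d\tau$ by dominated convergence. There is no $x$-uniform integrable majorant near $\tau=t$: by \eqref{ls2}, $\sup_{y}|E_x(y,u)|$ is of order $u^{-1}$ (maximize $|y|u^{-\alm/2-1}\exp\left(-\kappa(|y|u^{-\alm/2})^{\frac{2}{2-\alm}}\right)$ over $|y|\sim u^{\alm/2}$), and this order is genuinely attained for $x$ near $\st$ at $t-\tau\sim|x-\st|^{2/\alm}$ --- that concentration is exactly what produces the jump --- while $|\varphi(\tau)-\varphi(t)|$ has no modulus-of-continuity rate to compensate, so the product need not admit an integrable dominating function independent of $x$. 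The correct mechanism is the paper's $\delta$-splitting: on $(t-\delta,t)$ bound the contribution by $\sup_{t-\delta<\tau<t}|\varphi(\tau)-\varphi(t)|$ times the uniform mass bounds $\intdt|E_x(x-\st,t-\tau)|d\tau\le\jd$ (inequality \eqref{ej1}, itself a by-product of the exact Wright-function computation) together with \eqref{qecc} and \eqref{qedd}, pass to the limit $x\to\st$ for fixed $\delta$, and only then let $\delta\to0$ using continuity of $t^{1-\alm}\varphi$ at $t>0$. As written, your argument for $J_2$ would fail; it must be replaced by this two-parameter limiting scheme (or an equivalent uniform-mass argument).
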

Moreover, we prove the continuity of the integral operator, corresponding to the kernel $Z$, across the boundary $x=s(t)$.
\begin{theorem}\label{lemma2}
    Let $t^{1-\alm}\varphi(t) \in C([0,T])$ and $s \in C([0,T])$. Then for all $t \in (0,T]$ there holds
    \begin{equation}\label{qwa2}
    \begin{split}
        &\lim_{x \to s(t)} \frac{\partial}{\partial x} \intzt \varphi(\tau) Z(x-s(\tau), t-\tau)d\tau = \intzt \varphi(\tau) Z_{x}(s(t)-s(\tau),t-\tau)d\tau.
    \end{split}
    \end{equation}
\end{theorem}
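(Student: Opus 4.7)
The plan is to justify differentiation under the integral sign and then apply dominated convergence to pass $x \to s(t)$ through the integral. The absence of a jump, in contrast with Theorem \ref{lemma1}, reflects the fact that $Z$ is associated with the initial condition and is therefore a regular fundamental solution lacking the extra time singularity carried by $E$ that produces the jump $\pm \varphi(t)/2$.

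First, I would verify that for $x$ in a neighbourhood of $s(t)$ with $x \neq s(t)$,
\begin{equation*}
\frac{\partial}{\partial x} \int_0^t \varphi(\tau) Z(x-s(\tau), t-\tau) d\tau = \int_0^t \varphi(\tau) Z_x(x-s(\tau), t-\tau) d\tau.
\end{equation*}
Using the growth hypothesis $|\varphi(\tau)| \leq C \tau^{\alpha_m - 1}$ coming from $t^{1-\alpha_m}\varphi \in C[0,T]$, this reduces to a pointwise bound on $Z_x(y,r)$ that is integrable in $r \in (0,t)$ and uniform in $y$ ranging over a neighbourhood of $s(t)-s(\tau)$. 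Such bounds should be available from the structural properties of $Z$ and $Z_x$ derived in the Preliminaries.

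Second, I would split the $\tau$-integral at $\tau = t-\delta$. On $[0, t-\delta]$ the map $x \mapsto Z_x(x-s(\tau), t-\tau)$ is continuous uniformly in $\tau$, because $t-\tau \geq \delta > 0$ keeps the kernel away from its only singular point; hence the limit $x \to s(t)$ passes inside and yields $\int_0^{t-\delta} \varphi(\tau) Z_x(s(t)-s(\tau), t-\tau) d\tau$. On the residual interval $[t-\delta, t]$ I would majorise both the $x$-dependent integral and its candidate limit by a common bound going to $0$ as $\delta \to 0^+$, independently of $x$ close to $s(t)$; the continuity of $s$ ensures that $x-s(\tau)$ can be controlled uniformly in $\tau$ near $t$.

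The main obstacle is obtaining a sufficiently sharp pointwise bound on $Z_x$. In the multi-term case, $Z$ lacks the clean self-similar scaling $Z(x,t) = t^{-\alpha/2}\Phi(xt^{-\alpha/2})$ enjoyed by the single-term regime, so its behaviour as $t-\tau \to 0^+$ must be extracted from an integral representation that carefully balances the contributions of the different orders $\alpha_1 < \cdots < \alpha_m$. Once an estimate of the type $|Z_x(y,r)| \leq C r^{-\gamma}$ with $\gamma < 1$, possibly refined through the smallness of $|y|$, is secured, the remainder of the argument is a routine dominated-convergence and $\varepsilon$--$\delta$ splitting.
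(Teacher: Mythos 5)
Your overall plan coincides with the paper's proof: differentiate under the integral sign (Lemma \ref{duis2}), split the $\tau$-integral at $t-\delta$, make the piece over $(t-\delta,t)$ small uniformly in $x$ using a spatially uniform bound on $Z_x$ with an integrable time singularity, and pass to the limit on the remaining piece. The ``main obstacle'' you defer is not open: the required estimate is exactly \eqref{ls5}, $|Z_x(y,r)|\le C\,r^{-\alpha_m}$ with $\alpha_m<1$ (together with \eqref{ls6} for $Z_{xx}$), already derived in Section 3 from Lemma \ref{lemest}, and it is uniform in the space variable, so both the differentiation step and the tail estimate go through as you describe.

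The one step whose justification fails as stated is your treatment of $[0,t-\delta]$: you claim $x\mapsto Z_x(x-s(\tau),t-\tau)$ is continuous uniformly in $\tau$ ``because $t-\tau\ge\delta$ keeps the kernel away from its only singular point.'' The time singularity is not the only one. For every fixed $r>0$ the function $y\mapsto Z_x(y,r)$ has a jump at $y=0$: a short computation from \eqref{fmt}, \eqref{zmt} (using $w_{1-\alpha_k}(p,r)\to r^{-\alpha_k}/\Gamma(1-\alpha_k)$ as $p\to 0^+$, cf.\ \eqref{wmpt4}) gives $Z_x(0^{\pm},r)=\mp\tfrac12\sum_{k=1}^{m}\lambda_k\,r^{-\alpha_k}/\Gamma(1-\alpha_k)\neq 0$. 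Hence keeping $t-\tau\ge\delta$ does not make $Z_x$ continuous in the spatial variable where $x-s(\tau)$ is near $0$, and with $s$ merely continuous nothing prevents $s(\tau)$ from being close to, or equal to, $s(t)$ for $\tau\le t-\delta$. The paper does not invoke such continuity: on $(0,t-\delta)$ it writes the difference $Z_x(x-s(\tau),t-\tau)-Z_x(s(t)-s(\tau),t-\tau)$ via the mean value theorem and the bound \eqref{ls6}, obtaining $|Z_2|\le C\,\delta^{-\frac32\alpha_m}\,t^{\alpha_m}\,|x-s(t)|\to 0$ as $x\to s(t)$ for fixed $\delta$. To repair your version, either adopt that quantitative route, or argue by dominated convergence (majorant \eqref{ls5}) using pointwise convergence only at those $\tau$ with $s(\tau)\neq s(t)$, which requires noting that the exceptional set $\{\tau:\, s(\tau)=s(t)\}$ is negligible --- essentially the same tacit restriction under which the mean-value step, and indeed the two-sided limit in \eqref{qwa2} itself, is meaningful.
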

These results are motivated by the study of a generalized fractional Stefan problem, which can be derived by application of the energy conservation law and the nonlocal generalization of Fourier's law. To formulate this problem, we first introduce the distributed-order Caputo derivative
\begin{equation*}
    D^{(\mu)}f(t) = \int_{0}^{1} (D^{\al}f)(t)\mu(\al)d\al, \quad f \in \text{AC}[0,T],
\end{equation*}
where $\mu:[0,1] \to \rr$ is a nonnegative and measurable function and $D^{\al}$ denotes the fractional derivative of Caputo. From \cite{kubica1} we know that if $\mu \in L^1(0,1)$ and $\int_0^1 \mu(\alpha)\,d\alpha > 0$ then then there exists a non-negative $\gm \in L^1_{\text{loc}}[0,\infty)$ such that the operator of fractional integration $I^{(\mu)}$, defined by the formula $I^{(\mu)}f = \gm \ast f$ satisfies
\begin{equation}\label{f7}
    (D^{(\mu)}I^{(\mu)}f)(t) = f(t) \quad \text{for} \quad f \in L^{\infty}(0,T),
\end{equation}
\begin{equation}\label{f8}
    (I^{(\mu)}D^{(\mu)}f)(t) = f(t)-f(0) \quad \text{for} \quad f \in \text{AC}[0,T].
\end{equation}
We focus on the discrete measure $\mu$ defined by  
$\mu(\alpha) = \sum_{k=1}^{m} \lambda_k \delta(\alpha-\alpha_k)$,  
with the same parameters $\lambda_k$ as in \eqref{maine1}.  
In the classical Stefan problem, the function $s(t)$, describing the free boundary, is strictly increasing in time, and we expect the same behaviour in the generalised fractional Stefan problem. It is also worth mentioning what regularity is required for the function $s$ describing the moving boundary to reduce the Stefan problem to an integral equation for the free boundary. For this purpose, we introduce the space
\[
C_{1-\alm}([0,T]) := \left\{ f \in C((0,T]): t^{1-\alm} f(t) \in C([0,T]) \right\}
\]
with the norm
\[
 \|f\|_{C_{1-\alm}([0,T])} = \sup_{t \in [0,T]} |t^{1-\alm} f(t)|,
\]
and
\[
C_{1-\alm}^1([0,T]) := \left\{f \in C([0,T]):  f' \in C_{1-\alm}([0,T]) \right\}
\]
equipped with the norm
\[
\|f\|_{C_{1-\alm}^1([0,T])} = \sup_{t \in [0,T]} |f(t)| + \sup_{t \in [0,T]} |t^{1-\alm} f'(t)|.
\]
We assume that $s \in C^{1}_{1-\alm}([0,T])$. Defining $l(x)$ as the inverse function of the free boundary $s(t)$ on $[s(0),s(T)]$ and setting $l(x)=0$ on $[0,s(0)]$, we introduce the generalized flux
\begin{equation*}
    q^{*}(x,t) := - \frac{d}{dt} I^{(\mu)}_{l(x)} u_{x}(x,t) = - \frac{d}{dt} \int_{l(x)}^{t} g_{\mu}(t-\tau) u_{x}(x,\tau) d\tau.
\end{equation*}
Applying the energy conservation law together with this flux, we obtain the following multi-term time-fractional Stefan problem:
\begin{equation}\label{stefan2}
    \begin{split}
        (i) &\quad   \sum_{k=1}^{m} \lambda_k D^{\al_k}_{l(x)}u(x,t)-u_{xx}(x,t)= 
         - \sum_{k=1}^{m} \lambda_k  \n{t-s^{-1}(x)}^{-\al_k} \frac{1}{\Gamma(1-\al_k)}   \chi_{(s(0),s(t))}(x)  \\
         &\quad \text{in} \ 0 < x < s(t), \ 0 < t \leq T\\
        (ii) &\quad  u_x(0,t)= -h(t) \leq 0, \quad 0 \leq t \leq T\\
        (iii) &\quad  u(s(t),t)=0, \quad 0 \leq t \leq T\\
        (iv) &\quad u(x,0) = u_{0}(x), \quad \text{for} \quad  0 \leq x \leq b, \quad s(0) = b,\\
        (v) &\quad  \dot{s}(t) = - \lim_{x \to s(t)^{-}} \frac{\partial }{\partial t} I^{(\mu)}_{l(x)} u_x(x,t), \quad \text{for} \quad 0 < t \leq T,
    \end{split}
\end{equation} 
where
\begin{align*}
     D^{\al_k}_{l(x)}u(x,t) = \frac{1}{\Gamma(1-\al_k) }\int_{l(x)}^{t} (t-\tau)^{-\al_k} u_t(x,\tau) d\tau \quad k=1,\ldots,m.
\end{align*}
Theorems \ref{lemma1} and \ref{lemma2} are crucial in reducing the multi-term time-fractional Stefan problem to an equivalent integral equation for $s(t)$, following the approach of Friedman outlined above.

In our analysis, we will follow the technique used by Friedmann in \cite{friedman2} and Krasnoschok in \citep{Mykola0, MykolaU}. However, in multi-term case the fundamental solution has a more complex structure and does not admit standard scaling properties, which generate additional difficulties to overcome.

The paper is organised as follows. In Chapter 2, we will briefly recall the results from \cite{pskhu3} that play a significant role in the following part of the paper. Chapter 3 is devoted to the estimates for the $E$ and $Z$ kernels. In Chapter 4, we
prove the jump relation for the potential-type integral corresponding to the kernel $E$ across the boundary $x=s(t)$. Finally, in Chapter 5, we examine the limiting behaviour of the integral operator corresponding to the kernel $Z$.

\section{Preliminaries}

In this section we will present important results concerning the fundamental solutions to the fractional diffusion equation with discretely distributed
differentiation operator. These results mainly comes from \cite{pskhu3}.

\begin{definition}
Let $T > 0$ and $\al \in (0,1)$.
For every $f \in L^1(0,T)$ the Riemann-Liouville fractional integral of order $\al$ is defined by
    \begin{equation}\label{f1}
    I^{\al}f(t) = \jgja \intzt (t-\tau)^{\al-1} f(\tau) d\tau.
\end{equation}
Moreover, for $f \in L^1(0,T)$ we define
\begin{align}\label{f22}
    I^{0}f(t) = f(t).
\end{align}
For $f$ regular enough, we define the Riemann-Liouville fractional derivative as
\begin{equation*}
    \partial^{\al}f(t) = \frac{d}{dt} I^{1-\al}f(t) = \frac{1}{\Gamma(1-\al)} \frac{d}{dt} \intzt (t-\tau)^{-\al} f(\tau) d\tau
\end{equation*}
and the fractional derivative of Caputo as
\begin{equation*}
     D^{\al}f(t) = \frac{d}{d t} \n{I^{1-\al}[f(t)-f(0)]} = \frac{1}{\Gamma(1-\al)} \frac{d}{dt} \intzt (t-\tau)^{-\al} [f(\tau) - f(0)] d\tau.
\end{equation*}
We note that if $f \in \text{AC}[0,T]$, then $D^{\al}f$ may be equivalently written in the form
\begin{equation*}
   D^{\al}f(t) = I^{1-\al}f'(t) = \frac{1}{\Gamma(1-\al)} \intzt (t-\tau)^{-\al} f'(\tau) d\tau.
\end{equation*}
\end{definition}

\begin{definition}[\cite{luchko}, p. 3]\label{defW}
The Wright function is defined by
\begin{equation}\label{wf}
         W(z; -\beta, \delta) = \sum_{k=0}^{\infty} \frac{z^k}{k! \Gamma(\delta -\beta k)}, \quad z \in \C, \ \beta < 1, \ \delta \in \C.
\end{equation}

For $\beta < 1$, the series on the right hand side of the formula \eqref{wf} is convergent for all $z \in \C$. It is also convergent for $\beta = 1$ and $|z| < 1$ and for $\beta = 1$ and $|z| = 1$ under the condition $\re \delta > 1$. However, the Wright function is an entire function only in the case $\beta < 1$ and thus this condition is usually included in its definition. 

\end{definition}

\begin{lemma}[\cite{pskhu2}, \S 2, \S 3, \cite{MFbook}, equality (7.2.3)]  
Let $\beta \in (0,1)$.  Then the next assertions follow:
\medskip
\begin{enumerate}[label=(\alph*)] 
    \item For all $\nu, c, t > 0$ and $\delta \in \rr$ we have
    \medskip
      \begin{equation}\label{der1}
         I^{\nu} t^{\delta-1} W(-ct^{-\beta};-\beta,\delta) = t^{\delta+\nu-1} W(-ct^{-\beta};-\beta,\delta+\nu),
    \end{equation}  
   \begin{equation*}
       \partial^{\nu} t^{\delta-1} W(-ct^{-\beta};-\beta,\delta) = t^{\delta-\nu-1} W(-ct^{-\beta};-\beta,\delta-\nu),
    \end{equation*}
    \begin{align}\label{qwl}
    \intzn z^{\nu-1} W(-z;-\beta,\delta) dz = \frac{\Gamma(\nu)}{\Gamma(\beta \nu +\delta)}.
        \end{align}
    \item For all $z > 0$  
 \begin{align}\label{qwl1}
    W(-z;-\beta, \delta) > 0, \quad \quad \delta \geq 0,
\end{align}
\begin{equation}\label{qwj}
    W(-z; -\beta, 0)  = z\beta W(-z; -\beta, 1-\beta). 
\end{equation}
\end{enumerate}
\end{lemma}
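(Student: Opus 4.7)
The plan is to derive all five identities from the series definition \eqref{wf} of the Wright function, together with elementary formulas for fractional operations on power functions and, for the Mellin-type integral, the Hankel contour representation of the reciprocal Gamma function.

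For the two identities in (a) involving $I^{\nu}$ and $\partial^{\nu}$, I would substitute the term-by-term expansion
\[
t^{\delta-1} W(-ct^{-\beta};-\beta,\delta) \;=\; \sum_{k=0}^{\infty} \frac{(-c)^{k}}{k!\,\Gamma(\delta-\beta k)}\, t^{\delta - \beta k - 1},
\]
and apply the operators using $I^{\nu} t^{a-1} = \tfrac{\Gamma(a)}{\Gamma(a+\nu)} t^{a+\nu-1}$ and $\partial^{\nu} t^{a-1} = \tfrac{\Gamma(a)}{\Gamma(a-\nu)} t^{a-\nu-1}$ with $a = \delta - \beta k$. In each term the denominator $\Gamma(\delta-\beta k)$ cancels exactly against the numerator produced by the operator, leaving precisely the series for $t^{\delta\pm\nu-1} W(-ct^{-\beta};-\beta,\delta\pm\nu)$. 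Terms whose resulting Gamma factor is singular disappear by the convention $1/\Gamma = 0$ at non-positive integers, which is consistent with the series definition. The interchange of operator and infinite sum is justified by uniform convergence of the Wright series and its fractional transforms on compact subsets of $(0,\infty)$, which follows from $W$ being entire in its first argument.

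For the Mellin-type integral \eqref{qwl}, I would start from the Hankel contour representation
\[
W(-z;-\beta,\delta) \;=\; \frac{1}{2\pi i} \int_{\mathrm{Ha}} e^{\sigma - z\sigma^{\beta}}\, \sigma^{-\delta}\, d\sigma,
\]
valid for $\beta\in(0,1)$, which is the regime relevant here. Substituting into $\int_{0}^{\infty} z^{\nu-1} W(-z;-\beta,\delta)\, dz$ and swapping the order of integration (justified by deforming the Hankel contour so that $\operatorname{Re}(\sigma^{\beta})>0$, and for $\nu$ in a suitable strip), the inner $z$-integral evaluates to $\Gamma(\nu)\,\sigma^{-\beta\nu}$. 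What remains is $\Gamma(\nu)\cdot \tfrac{1}{2\pi i}\int_{\mathrm{Ha}} e^{\sigma}\sigma^{-\delta-\beta\nu}\, d\sigma$, which equals $\Gamma(\nu)/\Gamma(\delta+\beta\nu)$ by the Hankel formula for the reciprocal Gamma function.

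For part (b), the identity \eqref{qwj} is a short series rearrangement: the $k=0$ term of $W(-z;-\beta,0)$ vanishes because $1/\Gamma(0)=0$, and using $\tfrac{1}{\Gamma(-\beta k)} = \tfrac{-\beta k}{\Gamma(1-\beta k)}$ followed by the index shift $j=k-1$ converts the series into $z\beta\sum_{j=0}^{\infty}\tfrac{(-z)^{j}}{j!\,\Gamma(1-\beta-\beta j)} = z\beta W(-z;-\beta,1-\beta)$. The positivity \eqref{qwl1} is the main obstacle, since it is not visible from the alternating series. My preferred route is the probabilistic one: $W(-z;-\beta,1-\beta)$ is the Mainardi $M$-function, known to be the density of a one-sided $\beta$-stable distribution and thus strictly positive on $(0,\infty)$; positivity for $\delta > 1-\beta$ then follows from \eqref{der1} with $\nu=\delta-(1-\beta)$, since the Riemann-Liouville integral of a positive function is positive (here I use that $ct^{-\beta}$ sweeps all of $(0,\infty)$ as $t$ does, so positivity in $t$ is equivalent to positivity in $z$). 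For $0 < \delta \le 1-\beta$ one needs a separate argument, most cleanly via the Mellin-Barnes representation of $W$ as an inverse Mellin transform of $1/\Gamma(\delta-\beta s)$ with a contour deformation, or by invoking directly the cited results from \cite{Mykola0} and \cite{MFbook}.
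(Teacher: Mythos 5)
This lemma is not proved in the paper at all: it is quoted from \cite{Mykola0} (Lemma 3.1) and \cite{MFbook} (equality (7.2.3)), so there is no internal proof to compare with. What the paper does contain, however, is a cautionary remark (the one immediately following Remark~\ref{insmm}) devoted precisely to explaining why your main device for the first two identities is invalid, and that criticism applies verbatim to your proposal. You apply $I^{\nu}t^{a-1}=\frac{\Gamma(a)}{\Gamma(a+\nu)}t^{a+\nu-1}$ termwise with $a=\delta-\beta k$. Since $\beta>0$ and $\delta$ is a fixed real number, for all sufficiently large $k$ one has $\delta-\beta k\le 0$; then $t^{\delta-\beta k-1}$ is not integrable at the origin, $I^{\nu}$ of that single term diverges, and the power-function formula \eqref{re2} (which requires a positive exponent) does not apply. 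Uniform convergence of the Wright series on compact subsets of $(0,\infty)$ cannot justify the interchange, because the Riemann--Liouville integral reaches down to $\tau=0$, where the individual terms are not even integrable. Worse, whenever $\delta-\beta k\in\{0,-1,-2,\ldots\}$ the $k$-th term of the source series vanishes identically, so its termwise image is zero, while the corresponding coefficient $1/\Gamma(\delta+\nu-\beta k)$ of the claimed limit series is in general nonzero; the ``cancellation of $\Gamma(\delta-\beta k)$'' you invoke is there a $0\cdot\infty$ manipulation, and the termwise computation would produce a series with those terms missing --- exactly the discrepancy exhibited in the paper's remark. The same objection hits your termwise treatment of $\partial^{\nu}$.

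A correct route for \eqref{der1} is, for instance, the Laplace-transform pair $\mathcal{L}\{t^{\delta-1}W(-ct^{-\beta};-\beta,\delta)\}(s)=s^{-\delta}e^{-cs^{\beta}}$ combined with the fact that $I^{\nu}$ multiplies the transform by $s^{-\nu}$, or a computation on the Hankel representation you already use for \eqref{qwl}, after the substitution $\sigma=ts$ which gives $t^{\delta-1}W(-ct^{-\beta};-\beta,\delta)=\frac{1}{2\pi i}\int_{\mathrm{Ha}}e^{ts-cs^{\beta}}s^{-\delta}\,ds$, with the interchange justified on a suitably chosen contour; the derivative identity then follows from \eqref{der1} via $\partial^{\nu}=\frac{d^{n}}{dt^{n}}I^{n-\nu}$, where ordinary termwise differentiation is legitimate. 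The rest of your proposal is essentially fine: the Hankel-contour evaluation of \eqref{qwl} and the index-shift proof of \eqref{qwj} are correct (modulo the routine Fubini justification you indicate), and your positivity argument for \eqref{qwl1} via the Mainardi function together with \eqref{der1} covers $\delta\ge 1-\beta$, but, as you concede, it leaves $0<\delta<1-\beta$ to the cited literature, so that part is not self-contained either.
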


Throughout this paper we will often use the following inequality
\begin{equation}\label{expe}
    |y|^{\gamma}\exp\n{-\kappa|y|^{\frac{2}{2-\al}}} \leq C(\gamma,\kappa, \al), \quad y>0, \ \gamma \geq 0, \ \kappa > 0.
\end{equation}

We define (see \cite{pskhu3}, equality (6) i (7))
\begin{multline}\label{fmt}
    \Gamma^{\mu}_1(x,t; \al_1, \ldots, \al_m; \lambda_1, \ldots, \lambda_m) \\
    =  (4\pi)^{-\jd} \intzn p^{-\jd} e^{-\frac{|x|^2}{4p} } S^{\mu}_m(t;-\lambda_1 p, \ldots, -\lambda_m p;-\al_1,\ldots, -\al_m)   dp,
\end{multline}
where for any positive parameters $\lf_1 , \ldots, \lf_m$ there holds
\begin{align}\label{smim}
 S^{\mu}_m(t;-\lf_1 , \ldots, -\lf_m ;-\al_1,\ldots, -\al_m)  = (h_1 \ast h_2 \ast \ldots \ast h_m)(t),
\end{align}
and
\begin{align}\label{smim1}
    h_j(t) = t^{\mu_j-1} W(-\lf_j t^{-\al_j};-\al_j,\mu_j) \quad \text{for} \ j=1,\ldots, m,
\end{align}
where $\mu_j \in \rr$ such that $\mu = \sum_{j=1}^{m} \mu_j$. To simplify the notation, we will write
\begin{align*}
     \Gamma^{\mu}_1(x,t) = \Gamma^{\mu}_1(x,t; \al_1, \ldots, \al_m; \lambda_1, \ldots, \lambda_m),
\end{align*}
\begin{align}\label{wmpt}
     w_{\mu}(p,t) = S^{\mu}_m(t;-\lambda_1 p , \ldots, -\lambda_m p;-\al_1,\ldots, -\al_m) .
\end{align}

\begin{remark}[\cite{pskhu4}, Remark 1]\label{insmm}
    The function $ S^{\mu}_m(t;-\lambda_1 , \ldots, -\lambda_m ;-\al_1,\ldots, -\al_m)$ is independent of the distribution of values of $\mu_i \in \rr$, but depends only on their sum $\mu$.
\end{remark}

\begin{remark}
    We observe that for any positive parameters $\lf_1, \ldots, \lf_m$ and $t>0$ we have
\begin{align}\label{ej01}
    S^{0}_m(t;-\lf_1 ,\ldots, -\lf_m ;-\al_1,\ldots, -\al_m) > 0 
\end{align}
because using property \eqref{qwj} and \eqref{qwl1} we have $h_j(t) > 0$ for $t > 0$ and $j = 1,\ldots,m$.
\end{remark}

\begin{remark}
    From Lemma 1 in \cite{pskhu3} we know that for all $\mu \in \rr$, $\kappa < (1-\alm)\n{\alm^{\alm} \lamm}^{\frac{1}{1-\alm}}$, $p > 0$ and $t > 0$ we have the estimate
        \begin{align}\label{wmpt4}
            |w_{\mu}(p,t)| \leq C t^{\mu-1} \n{pt^{-\alm}}^{-\theta} \exp\n{-\kappa \n{\frac{p}{t^{\alm}}   }^{\frac{1}{1-\alm}}}, 
        \end{align}
        where 
\begin{align*}
     \theta \geq \left\{ \begin{array}{ll}
0, & \ (-\mu) \notin \N_{0},\\
-1, & (-\mu) \in \N_{0},
\end{array} \right.
\end{align*}        
   and  $C = C(\mu, \lamm, \alm, \kappa, \theta)$. This estimate will be very useful for future analysis.
\end{remark}

We define the kernels 
\begin{align}\label{zmt}
    Z(x,t) = \sum_{k = 1}^{m} \lambda_k \Gamma^{1-\al_k}_{1}(x,t),
\end{align}
and
\begin{align}\label{emt}
    E(x,t) = \Gamma_1^{0}(x,t),
\end{align}
which are the fundamental solutions to the problem \eqref{maine1}-\eqref{maine2}. More precisely:
\begin{itemize}
    \item The kernel $Z(x,t)$ satisfies, in the sense of distributions, the following homogeneous equation
    \begin{equation*}
        \sum_{k=1}^{m} \lambda_k D^{\alpha_k} Z(x,t) -  Z_{xx}(x,t) = 0, 
        \quad t>0, \ x \in \mathbb{R},
    \end{equation*}
    with the initial condition $Z(x,0)=\delta(x)$.
    \item The kernel $E(x,t)$ satisfies, in the sense of distributions, the following  inhomogeneous equation 
    \begin{equation*}
        \sum_{k=1}^{m} \lambda_k D^{\alpha_k} E(x,t) -  E_{xx}(x,t) 
        = \delta(t)\delta(x), 
        \quad t>0, \ x \in \mathbb{R},
    \end{equation*}
    with $E(x,0)=0$.
\end{itemize}

\begin{definition}[\cite{pskhu3}, \S 2]
    Let us denote
\[
D = \mathbb{R} \times (0, T), \quad D_0 = \mathbb{R} \times [0, T).
\]

A \emph{regular solution} of equation \eqref{maine1} in the domain \(D\) is a function \(u(x,t)\) such that:

\begin{enumerate}
    \item \(u(x,t)\) is twice continuously differentiable with respect to the variable \(x\) in \(D\);
    \item \(t^{1-\gamma} u(x,t) \in C(D_0)\) for some \(\gamma > 0\);
    \item The function
    \[
    u(x,t) \in C(D_0),
    \]
 is absolutely continuous with respect to \(t\) on the half-interval \([0, T)\) for each fixed \(x \in \mathbb{R}\).
\end{enumerate}
\end{definition}

This allows us to establish the subsequent theorem.

\begin{theorem}[see \cite{pskhu3}, Theorem 1]\label{fsol}
Let $u_{0}(x)$ be continuous in $\rr$, $t^{1-\gamma}f(x,t) \in C(\rr \times [0,T))$   for some $\gamma > 1-\alm$ and there exists exponent $q>0$ and $C>0$ such that
for all $t \in (0,T)$ we have
\begin{align*}
    |f(x_1,t)-f(x_2,t)| \leq C |x_1-x_2|^q \quad \text{for} \quad x_1, x_2 > 0.
\end{align*}
Moreover, we assume that
\begin{align*}
    \lim_{|x| \to \infty} u_{0}(x) \exp\n{-\kappa |x|^{\frac{2}{2-\alm}}} = 0,
\end{align*}
\begin{align*}
   \lim_{|x| \to \infty} \sup_{t \in (0,T)} t^{1-\gamma} f(x,t) \exp\n{-\kappa |x|^{\frac{2}{2-\alm}}} = 0,
\end{align*}
where $\kappa < (2-\alm)\n{\frac{\alm}{T}}^{\frac{\alm}{2-\alm}} \n{\frac{\lamm}{4}}^{\frac{1}{2-\alm}}$. Then, function $u$ defined as
\begin{align*}
    u(x,t) = \intzt \int_{\rr} E(x-y,t-\tau) f(y,\tau) dy d\tau + \int_{\rr} Z(x-y,t) u_0(y) dy 
\end{align*}
is the regular solution to problem
\begin{equation*}
\begin{split}
    \sum_{k=1}^{m} \lambda_k D^{\al_k} u(x,t) - u_{xx}(x,t) &= f(x,t) , \quad x \in \rr, \ t \in (0,T)\\
   u(x,0) &= u_0(x), \quad x \in \rr.
\end{split}
\end{equation*}
\end{theorem}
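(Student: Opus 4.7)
The plan is to decompose $u = V + W$ with
\[
V(x,t) = \int_0^t \int_{\mathbb R} E(x-y,t-\tau) f(y,\tau)\,dy\,d\tau, \qquad W(x,t) = \int_{\mathbb R} Z(x-y,t) u_0(y)\,dy,
\]
and to verify three items for each piece: (i) absolute convergence of the defining integrals and of those obtained after formally applying $\partial_x^2$ and each $D^{\alpha_k}$; (ii) the initial behaviour $V(x,0^+)=0$ and $W(x,0^+)=u_0(x)$; (iii) the differential identities $(\sum_k\lambda_k D^{\alpha_k}-\partial_x^2)W=0$ and $(\sum_k\lambda_k D^{\alpha_k}-\partial_x^2)V=f$. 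The whole argument is built on the subordination representation \eqref{fmt}, the estimate \eqref{wmpt4} for $w_\mu(p,t)$, and the Wright-function identities \eqref{der1}, \eqref{qwl}, \eqref{qwj}.

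For convergence, I would first derive pointwise bounds of the form $|\Gamma_1^\mu(x,t)| \le C\, t^{a(\mu)} \exp(-\kappa |x|^{2/(2-\alpha_m)})$ for some $a(\mu)\in\mathbb R$, by inserting \eqref{wmpt4} into \eqref{fmt}, applying \eqref{expe} to extract the weight $\exp(-\kappa|x|^{2/(2-\alpha_m)})$, and evaluating the remaining $p$-integral via \eqref{qwl}. Combined with the hypotheses $\lim_{|x|\to\infty}u_0(x)\exp(-\kappa|x|^{2/(2-\alpha_m)})=0$ and the analogous one for $f$, these bounds make both $V$ and $W$ (and the formally differentiated integrals) absolutely convergent on $\mathbb R\times(0,T)$. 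For the initial values, $V(x,0^+)=0$ is immediate from the integration limits and the integrability of $t^{1-\gamma}f$ with $\gamma>1-\alpha_m$. For $W$, I would use \eqref{qwl} to verify $\int_{\mathbb R} Z(y,t)\,dy=1$ for each $t>0$ (each summand $\lambda_k\int\Gamma_1^{1-\alpha_k}(y,t)dy$ collapses via the $p$-integral of the Gaussian to a one-dimensional integral of a Wright function, which equals $1/\lambda_k$ by \eqref{qwl} applied with the appropriate parameters); together with the positivity \eqref{ej01} and the self-similar spatial concentration on scale $t^{\alpha_m/2}$ as $t\to 0^+$, this gives $W(x,t)\to u_0(x)$ pointwise, continuously in $x$.

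The core of the proof is the differential identity for $\Gamma_1^\mu$. Using the Caputo definition, \eqref{der1} applied in the $t$-variable of the inner function $w_\mu(p,t)$, and $\partial_x^2$ acting directly on the Gaussian $p^{-1/2}e^{-x^2/(4p)}$ (which equals $\partial_p(p^{-1/2}e^{-x^2/(4p)})$ up to a constant), one integrates by parts in $p$ and exploits identities among the Wright functions $W(-\lambda_j p t^{-\alpha_j};-\alpha_j,\mu_j)$ to obtain a recursion of the form $(\sum_k\lambda_k D^{\alpha_k})\Gamma_1^\mu = \partial_x^2\Gamma_1^\mu + (\text{boundary term in }p)$. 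Specialising $\mu=0$ (for $E$) kills the boundary term for $t>0$, yielding that $E$ solves the homogeneous equation away from $t=0$; specialising $\mu=1-\alpha_k$ and summing with weights $\lambda_k$ (for $Z$) exploits \eqref{qwj} to collapse the sum into the homogeneous equation satisfied by $Z$. Applying these identities under the $y$-integral gives $(\sum_k\lambda_k D^{\alpha_k}-\partial_x^2)W=0$. For $V$, the fractional derivative $D^{\alpha_k}$ commutes with the spatial integral and produces a diagonal contribution at $\tau=t$; the off-diagonal part cancels against $\partial_x^2 V$ by the homogeneous identity for $E$, while the diagonal part, evaluated using the Hölder continuity of $f$ in $x$ with exponent $q>0$ and the approximate-identity property of $E(\cdot,0^+)$, contributes exactly $f(x,t)$.

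The hardest step will be justifying interchange of the fractional derivative $D^{\alpha_k}$ with the $dy\,d\tau$-integral in $V$ and extracting the diagonal contribution $f(x,t)$. The standard recipe is to rewrite $D^{\alpha_k}V = \partial_t I^{1-\alpha_k}[V(\cdot,t)-V(\cdot,0)]$ and split the resulting triple integral into a near-diagonal piece $\tau\in(t-\varepsilon,t)$, where Hölder continuity of $f$ in $x$ and a change of variables reduce it to $\int_{\mathbb R} E(x-y,0^+)f(y,t)\,dy = f(x,t)$, and a far-diagonal piece $\tau\in(0,t-\varepsilon)$, on which \eqref{wmpt4} permits differentiation under the integral sign so that the homogeneous identity for $E$ applies. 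Carrying out this splitting carefully, uniformly in $\varepsilon\to 0$, is the technical crux; all other steps reduce to manipulations of the Wright-function identities already collected in the Preliminaries.
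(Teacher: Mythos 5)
The paper itself gives no proof of Theorem \ref{fsol}: it is quoted verbatim from Pskhu (\cite{pskhu3}, Theorem 1) as a preliminary, so there is no in-paper argument to compare yours against. Your outline does follow the natural (Pskhu-style) route --- split $u$ into the volume potential $V$ and the initial-data potential $W$, get convergence from the bound \eqref{wmpt4} inserted into \eqref{fmt}, verify the PDE through the identities $\partial^{\al_k}\Gamma^{\mu}_n=\Gamma^{\mu-\al_k}_n$, and check the initial condition by an approximate-identity argument --- but two of your concrete steps are wrong as stated, and both fail exactly because of the multi-term structure that the paper emphasizes destroys single-term scaling.

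First, the normalization of $Z$. Your claim that each $\int_{\mathbb R}\Gamma_1^{1-\al_k}(y,t)\,dy$ ``collapses to a one-dimensional Wright integral equal to $1/\lambda_k$'' is true only for $m=1$: for $m\ge 2$ the function $w_{1-\al_k}(p,t)$ is an $m$-fold convolution in $t$ in which every factor depends on $p$, so no substitution removes $t$, and the individual integrals are genuinely $t$-dependent. Moreover, if each were $1/\lambda_k$, then by \eqref{zmt} you would get $\int_{\mathbb R}Z(y,t)\,dy=\sum_k\lambda_k\cdot\lambda_k^{-1}=m\neq 1$, which would ruin the step $W(x,0^+)=u_0(x)$. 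The correct statement is that only the weighted sum is normalized: taking Laplace transforms in $t$, each factor in \eqref{smim1} transforms to $s^{-\mu_j}e^{-\lambda_j p\, s^{\al_j}}$, hence $\int_0^\infty w_{1-\al_k}(p,t)\,dp$ has transform $s^{\al_k-1}\big/\sum_j\lambda_j s^{\al_j}$, and summing with weights $\lambda_k$ gives $s^{-1}$, i.e. $\int_{\mathbb R}Z(y,t)\,dy=1$ for every $t>0$ even though no single summand has constant mass. Second, the diagonal term in $V$. The kernel $E(\cdot,t)$ is \emph{not} an approximate identity as $t\to 0^+$: by \eqref{ls1} (or the transform $1\big/\sum_j\lambda_j s^{\al_j}$) its spatial mass is of order $t^{\al_m-1}\to\infty$, so the near-diagonal piece cannot be evaluated as $\int_{\mathbb R}E(x-y,0^+)f(y,t)\,dy=f(x,t)$ in heat-equation fashion, and your splitting would not produce the source term. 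The inhomogeneity has to emerge from the fractional-time bookkeeping: $\partial_t I^{1-\al_k}$ applied to the time convolution with $E=\Gamma^0_1$ generates $\Gamma^{1-\al_k}_1$-type kernels, and it is the combination $\sum_k\lambda_k\Gamma^{1-\al_k}_1=Z$, with its unit mass, that plays the approximate-identity role and delivers $f(x,t)$; the H\"older continuity of $f$ in $x$ is what controls the resulting singular integral. A smaller but related omission: the equation is stated with Caputo derivatives while the kernel identities are Riemann--Liouville; this is harmless for $V$ (which vanishes at $t=0$), but for $W$ you must account for $\partial^{\al_k}$ of the nonzero initial value, which again goes through the normalization of $Z$ rather than through $E$.
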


\begin{lemma}[see \cite{pskhu3}, Lemma 4]\label{lemest}
For $\mu \in \rr$, $\kappa < \n{2-\alm}\n{\frac{\alm^{\alm}\lamm}{4}}^{\frac{1}{2-\alm}}$, $|x|>0$ and $t > 0$ there holds
    \begin{equation}\label{gam1}
        \dm{\Gamma^{\mu}_{1}(x,t)} \leq C t^{\mu+\aldm-1}  \exp\n{-\kappa z^{\frac{1}{2-\alm}} },
    \end{equation}
    \begin{equation}\label{gam2}
        \dm{\frac{\partial}{\partial x} \Gamma^{\mu}_{1}(x,t)} \leq C |x| t^{\mu-\aldm  -1} \eta(z,p+2) \exp\n{-\kappa z^{\frac{1}{2-\alm}} },
    \end{equation}
    \begin{equation}\label{gam3}
         \dm{\frac{\partial^2}{\partial x^2} \Gamma^{\mu}_{1}(x,t)}\leq C t^{\mu-\aldm-1} \eta(z,p+2) \exp\n{-\kappa z^{\frac{1}{2-\alm}} },
    \end{equation}
where $z = |x|^2t^{-\alm}$, $C = C(\mu,\lamm, \alm, \kappa)$,
\begin{align*}
    p = \left\{ \begin{array}{ll}
1 & \textrm{for \quad $(-\mu) \notin \N_{0},$}\\
-1 & \textrm{for \quad $(-\mu) \in \N_{0},$}
\end{array} \right. \quad \quad
\eta(z,n) = \left\{ \begin{array}{ll}
1 & \textrm{for \quad $n<2,$}\\
1+|\ln z| & \textrm{for \quad $n=2$,}\\
z^{1-\frac{n}{2}} & \textrm{for \quad $n>2$.}
\end{array} \right.
\end{align*}
Moreover, for $t \in (0,T]$ there holds
\begin{equation}\label{gam4}
         \dm{\Delta_x \Gamma^{\mu}_{1}(x,t)}\leq C t^{\mu-\aldm-1}  \exp\n{-\kappa z^{\frac{1}{2-\alm}} },
    \end{equation}
    where  $C = C(T, \mu, \lamm, \al_1, \ldots, \alm, \kappa)$.
\end{lemma}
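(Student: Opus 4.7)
The plan is to derive all four inequalities from the integral representation (\ref{fmt}) combined with the pointwise bound (\ref{wmpt4}) on $w_{\mu}(p,t)$. Substituting (\ref{wmpt4}) into (\ref{fmt}) and changing variables $p = q\, t^{\alm}$, so that $|x|^{2}/(4p) = z/(4q)$ with $z = |x|^{2} t^{-\alm}$, immediately pulls out the prefactor $t^{\mu+\alm(1-n/2)-1}$ and reduces the problem to estimating
\begin{equation*}
J_{\sigma}(z) := \int_{0}^{\infty} q^{-\sigma}\exp\!\left(-\frac{z}{4q} - \kappa\, q^{1/(1-\alm)}\right) dq,
\end{equation*}
with $\sigma = n/2 + \theta$ and $\theta \in \{0,-1\}$ chosen according to whether $(-\mu)\in\N_{0}$.

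To extract the factor $\exp(-\kappa z^{1/(2-\alm)})$ I would apply weighted AM--GM with weights $\frac{1}{2-\alm}$ and $\frac{1-\alm}{2-\alm}$ to $A = z/(4q)$ and $B = \kappa q^{1/(1-\alm)}$. A direct calculation shows that $A^{1/(2-\alm)} B^{(1-\alm)/(2-\alm)}$ is independent of $q$ and equal to $c_{0}\, z^{1/(2-\alm)}$, so $A+B \geq c_{0}\, z^{1/(2-\alm)}$ pointwise. Splitting $A+B = (A+B)/2 + (A+B)/2$ and applying this bound to one half peels off the factor $\exp(-\tfrac{c_{0}}{2}\, z^{1/(2-\alm)})$, leaving a residual integral of the same shape as $J_{\sigma}$ but with half-strength exponents. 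Splitting the residual at $q=1$, the tail $q\geq 1$ is a $z$-independent constant thanks to the remaining stretched exponential, while the piece $\int_{0}^{1} q^{-\sigma} e^{-z/(8q)} dq$ is $O(1)$ for $\sigma<1$, grows like $1+|\ln z|$ for $\sigma=1$, and like $z^{1-\sigma}$ for $\sigma>1$ (via the substitution $u = z/(8q)$). This reproduces the $\eta(z,p)$ trichotomy with $p = 2\sigma$; the two admissible choices $\theta=0$ and $\theta=-1$ from (\ref{wmpt4}) give $p=n$ and $p=n-2$, yielding (\ref{gam1}).

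For (\ref{gam2}) I would differentiate under the integral using $\partial_{x_{j}} e^{-|x|^{2}/(4p)} = -(x_{j}/(2p))\, e^{-|x|^{2}/(4p)}$, which shifts $p^{-n/2}$ to $p^{-n/2-1}$ and produces the outer $|x_{j}|$; the preceding analysis runs with $\sigma$ increased by $1$, so $p$ is replaced by $p+2$. For (\ref{gam3}) the identity $\partial_{x_{j}}^{2} e^{-|x|^{2}/(4p)} = (x_{j}^{2}/(4p^{2}) - 1/(2p))\, e^{-|x|^{2}/(4p)}$ produces two summands: the $1/(2p)$ piece is handled exactly as above, and the $x_{j}^{2}/(4p^{2})$ piece is treated by first absorbing the extra $|x_{j}|^{2}$ into the Gaussian via the inequality (\ref{expe}), which consumes a small fraction of the exponent and removes the outer $|x_{j}|^{2}$ factor while leaving the $\eta(z,p+2)$ weight intact.

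The Laplacian bound (\ref{gam4}) cannot be obtained by two naive differentiations in $x$; the sharper $\eta(z,n)$ weight requires using the heat-kernel identity $\Delta_{x}(p^{-n/2}\, e^{-|x|^{2}/(4p)}) = \partial_{p}(p^{-n/2}\, e^{-|x|^{2}/(4p)})$ inside (\ref{fmt}). Integrating by parts in $p$ transfers the derivative onto $w_{\mu}(p,t)$; using the Wright-function identity $\partial_{z} W(z;-\al_{j},\mu_{j}) = W(z;-\al_{j},\mu_{j}-\al_{j})$ together with termwise differentiation of the convolution (\ref{smim}) gives $\partial_{p} w_{\mu}(p,t) = -\sum_{j=1}^{m} \lambda_{j}\, w_{\mu-\al_{j}}(p,t)$, so that
\begin{equation*}
\Delta_{x} \Gamma^{\mu}_{n}(x,t) = \sum_{j=1}^{m} \lambda_{j}\, \Gamma^{\mu-\al_{j}}_{n}(x,t).
\end{equation*}
Each term is bounded by (\ref{gam1}) with $\mu$ replaced by $\mu-\al_{j}$; the time factor $t^{\alm-\al_{j}}$ is bounded on $(0,T]$ and absorbed into the constant, while the $\eta$-weight assumes the sharper form $\eta(z,n)$. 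The main technical obstacle is the careful bookkeeping in the resonant regime $(-\mu)\in\N_{0}$ --- where (\ref{wmpt4}) degrades to $\theta=-1$ --- and the verification that the boundary terms at $p=0$ and $p=\infty$ in the integration by parts vanish thanks to the stretched-exponential decay of $w_{\mu}$ in $p$.
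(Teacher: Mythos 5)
Your proposal and the paper part ways at the outset: the paper does not reprove \eqref{gam1}--\eqref{gam3} at all (they are quoted verbatim from Lemma~4 of \cite{pskhu3}), and its only original content for this statement is Remark~\ref{corest}, which justifies the corrected estimate \eqref{gam4}. There the identity $\Delta_x \Gamma^{\mu}_{n}=\sum_{k}\lambda_k\Gamma^{\mu-\alpha_k}_{n}$ is taken from Pskhu's relations $\partial^{\alpha_k}\Gamma^{\mu}_{n}=\Gamma^{\mu-\alpha_k}_{n}$ and $\bigl(\sum_k\lambda_k\partial^{\alpha_k}-\Delta_x\bigr)\Gamma^{\mu}_{n}=0$, whereas you rederive the same identity via $\Delta_x\bigl(p^{-n/2}e^{-|x|^2/(4p)}\bigr)=\partial_p\bigl(p^{-n/2}e^{-|x|^2/(4p)}\bigr)$, integration by parts in $p$, and the Wright-derivative formula; that route is legitimate (your formula $\partial_p w_\mu=-\sum_j\lambda_j w_{\mu-\alpha_j}$ does follow, using Remark~\ref{insmm}), provided the boundary terms are actually shown to vanish. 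Likewise your reduction of \eqref{gam1}--\eqref{gam3} to the scaled integral $J_\sigma(z)$ and the trichotomy $O(1)$, $1+|\ln z|$, $z^{1-\sigma}$ for $\int_0^1 q^{-\sigma}e^{-z/(8q)}\,dq$ are correct in shape and essentially reproduce the cited source's argument.

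There are, however, two concrete gaps. First, in \eqref{gam4} you assert that the weight ``assumes the sharper form $\eta(z,n)$'' and defer the resonant case to bookkeeping --- but this is exactly the step Remark~\ref{corest} exists to handle: when $\alpha_j-\mu\in\mathbb{N}_0$ (note the relevant condition concerns $\mu-\alpha_j$, not $\mu$), estimate \eqref{gam1} applied to $\Gamma^{\mu-\alpha_j}_{n}$ yields only $\eta(z,n-2)$, which for $n>2$ exceeds $\eta(z,n)$ by a factor $z$ as $z\to\infty$; one must absorb this factor into the exponential via $\eta(z,n-2)\le C\,\eta(z,n)\exp\bigl(\epsilon z^{\frac{1}{2-\alm}}\bigr)$ at the cost of replacing $\kappa$ by $\kappa-\epsilon$. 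Without that argument the claimed $\eta(z,n)$ weight is unproved. Second, the stated range $\kappa<(2-\alm)\bigl(\alm^{\alm}\lamm/4\bigr)^{\frac{1}{2-\alm}}$ is not reachable by your devices: weighted AM--GM with weights $\tfrac{1}{2-\alm}$, $\tfrac{1-\alm}{2-\alm}$ gives only $(z/4)^{\frac{1}{2-\alm}}\kappa'^{\frac{1-\alm}{2-\alm}}$, whereas the exact minimum of $z/(4q)+\kappa' q^{\frac{1}{1-\alm}}$ is $\tfrac{2-\alm}{1-\alm}(1-\alm)^{\frac{1}{2-\alm}}(z/4)^{\frac{1}{2-\alm}}\kappa'^{\frac{1-\alm}{2-\alm}}$, which in the limit $\kappa'$ up to the bound in \eqref{wmpt4} gives precisely $(2-\alm)\bigl(\alm^{\alm}\lamm/4\bigr)^{\frac{1}{2-\alm}}z^{\frac{1}{2-\alm}}$; on top of this, halving the exponent, weakening the Gaussian from $e^{-z/(4q)}$ to $e^{-z/(8q)}$ to absorb $|x_j|^2$ in \eqref{gam3}, and bounding the tail $\int_1^\infty$ by a constant (which for $p>2$ must itself be traded against the decaying weight $\eta(z,p)$ at large $z$) each shrink the admissible $\kappa$ further. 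To obtain the lemma as stated you would need the exact minimization and to arrange every absorption so that it costs only an arbitrarily small $\epsilon$ --- or, as the paper does, quote \eqref{gam1}--\eqref{gam3} from \cite{pskhu3} and reserve the argument for \eqref{gam4}. (For the applications in Chapters 4--5 any positive $\kappa$ would do, but that is weaker than the statement under review.)
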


\begin{remark}\label{corest}
We can notice that there is a slight difference between the formulation of inequality \eqref{gam4} and its counterpart in Professor Pskhu's work, namely inequality $(57)$ in Lemma 4 in \cite{pskhu3}. In fact, the equality \eqref{gam4} holds for $t \in (0,T]$ and the constant $C$ depends also on $T$ and $\al_1, \ldots, \al_{m-1}$. This represents the corrected form of the inequality $(57)$ of Lemma 4 in \cite{pskhu3}. Let us justify it. From the following equalities (see $(44)$ and $(47)$ in \cite{pskhu3}), which, in case $n=1$, hold for $x \in \rr$ and $t > 0$,
\begin{align*}
    &\partial^{\alpha_k} \Gamma^{\mu}_{1}(x,t) = \Gamma^{\mu-\alpha_k}_1(x,t), \quad k=1,\ldots,m,\\
    &\left( \sum_{k=1}^{m} \lambda_k \partial^{\alpha_k} - \Delta_x \right) \Gamma^{\mu}_{1}(x,t) = 0,
\end{align*}
it follows that
\begin{align}\label{ap0}
    \Delta_x \Gamma^{\mu}_{1}(x,t) = \sum_{k=1}^{m} \lambda_k \Gamma^{\mu-\alpha_k}_1(x,t), 
    \quad x \in \rr, \ t > 0.
\end{align}
Moreover, using \eqref{gam1} we derive for all k = 1,\ldots, m
\begin{align}\label{ap}
    |\Gamma^{\mu-\al_k}_1(x,t)| \leq C_k t^{\mu-\al_k+\aldm -1} \exp\n{-\kappa z^{\frac{1}{2-\alm}}} \quad |x| > 0, \ t > 0,
\end{align}
where $C_k = C_k(\mu, \al_k, \lamm, \alm, n, \kappa)$.
Hence, using \eqref{ap0} and \eqref{ap} we get
\begin{align}\label{ap1}
     |\Delta_x \Gamma^{\mu}_{1}(x,t)| \leq  \sum_{k=1}^{m} C_k \lambda_k t^{\mu-\al_k+\aldm-1} \exp\n{-\kappa z^{\frac{1}{2-\alm}}}, \quad |x| > 0, \ t > 0       .
\end{align}
For $t \in (0,T]$ and for all $k=1,\ldots,m$ we have
\begin{align*}
    t^{-\al_k} = T^{-\al_k} \n{\frac{t}{T}}^{-\al_k} \leq T^{-\al_k} \n{\frac{t}{T}}^{-\alm} = T^{\alm-\al_k} t^{-\alm}.
\end{align*}
Using this observation in \eqref{ap1} we get correct version of inequality $(57)$ from \cite{pskhu3}
\begin{align*}
     |\Delta_x \Gamma^{\mu}_{1}(x,t)| &\leq C(T,\mu, \lamm, \al_1, \ldots, \alm,  \kappa) t^{\mu-\aldm-1} \exp\n{-\kappa z^{\frac{1}{2-\alm}}}, \quad \quad |x| > 0, \ t \in (0,T].
\end{align*}
\end{remark}

In Chapter 4. we will need the following version of Leibniz integral rule.

\begin{theorem}[see \cite{folland}, Theorem 2.27]\label{lir}
   Let $X$ be an open subset of $\rr$ and $\Omega$ be a measure space. Suppose $f: X \times \Omega \to \rr$ satisfies the following conditions:
   \begin{enumerate}
      \item $f(x,\omega)$ is Lebesgue-integrable function of $\omega$ for each $x \in X$.
      \item For almost all $\omega \in \Omega$, the partial derivative $f_x$ exists for all $x \in X$.
        \item There is an integrable function $\theta: \Omega \to \rr$ such that $|f_x(x,\omega)| \leq \theta(\omega)$ for all $x \in X$ and almost every $\omega \in \Omega$.
  \end{enumerate}
   Then, for all $x \in X$,
   \begin{align*}
       \frac{d}{dx} \int_{\Omega} f(x,\omega) d\omega = \int_{\Omega} f_x(x,\omega) d\omega.
   \end{align*}
\end{theorem}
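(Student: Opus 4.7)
The plan is to prove this as a standard consequence of the dominated convergence theorem combined with the mean value theorem, following the classical argument for differentiation under the integral sign. Fix $x \in X$. Because $X \subset \mathbb{R}$ is open, pick $r > 0$ such that the interval $(x - r, x + r)$ lies in $X$. By the sequential characterisation of the derivative, it suffices to show that for every sequence $(h_n)$ with $0 < |h_n| < r$ and $h_n \to 0$, the difference quotients
\[ g_n(\omega) := \frac{f(x + h_n, \omega) - f(x, \omega)}{h_n} \]
satisfy $\int_{\Omega} g_n(\omega) \, d\omega \to \int_{\Omega} f_x(x, \omega) \, d\omega$.

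To apply dominated convergence I would verify its three hypotheses. Each $g_n$ is measurable and integrable in $\omega$ by hypothesis (1), since it is a linear combination of integrable functions. For the pointwise dominating bound, fix an $\omega$ from the full-measure set on which hypothesis (2) holds, so that $t \mapsto f(t, \omega)$ is differentiable at every $t \in X$. The mean value theorem applied to this real-variable map on the closed interval between $x$ and $x + h_n$ yields some $\xi_n(\omega)$ strictly between them with $g_n(\omega) = f_x(\xi_n(\omega), \omega)$, and hypothesis (3) then gives the uniform $\omega$-wise estimate $|g_n(\omega)| \leq \theta(\omega)$, with $\theta$ integrable and independent of $n$. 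Pointwise convergence $g_n(\omega) \to f_x(x, \omega)$ on the same full-measure set of $\omega$ follows from the very definition of the partial derivative as $h_n \to 0$.

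Dominated convergence then yields
\[ \lim_{n \to \infty} \int_{\Omega} g_n(\omega) \, d\omega = \int_{\Omega} f_x(x, \omega) \, d\omega, \]
and rewriting the left-hand side as $\lim_{n \to \infty} h_n^{-1} \bigl( F(x + h_n) - F(x) \bigr)$ with $F(x) := \int_{\Omega} f(x, \omega) \, d\omega$ establishes differentiability of $F$ at $x$ together with the required formula; since the sequence $(h_n)$ was arbitrary, this really is the derivative. The only subtle point is measurability of $\omega \mapsto f_x(x, \omega)$, which is not explicitly stated in (2): on the full-measure set of $\omega$ provided by hypothesis (2) it coincides with the pointwise limit of the measurable functions $g_n$, and on the null complement we may redefine it to be zero without affecting the integral. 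The main obstacle, if any, is therefore bookkeeping with measurability and the exceptional null set, which the argument above handles transparently.
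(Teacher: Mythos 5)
Your argument is correct and is essentially the standard proof of this result: the paper itself gives no proof but cites Folland (Theorem 2.27), whose argument is exactly your combination of the sequential characterisation of the derivative, the mean value theorem to dominate the difference quotients by $\theta$, and the dominated convergence theorem. Your handling of the measurability of $\omega \mapsto f_x(x,\omega)$ and of the exceptional null set matches the standard treatment, so nothing further is needed.
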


\section{Estimates for kernels E and Z}
Estimating the E and Z kernels is crucial for continuing research. The required estimates will be derived from Lemma \ref{lemest}. 

\begin{lemma}
Let $\kappa < (2-\alm)\n{\frac{\alm^{\alm} \lamm}{4}}^{\frac{1}{2-\alm}}$, $|x|>0$ and $t > 0$. Then, we have
    \begin{equation}\label{ls1}
        \dm{E(x,t)} \leq C t^{\aldm-1} \exp\n{-\kappa \n{\frac{|x|}{t^{\aldm}}   }^{\frac{2}{2-\alm}} },
    \end{equation}
    \begin{equation}\label{ls2}
        \dm{E_x(x,t)} \leq C |x| t^{-\aldm -1} \exp\n{-\kappa \n{\frac{|x|}{t^{\aldm}}   }^{\frac{2}{2-\alm}} },
    \end{equation}
    \begin{equation}\label{ls3}
        \dm{E_{xx}(x,t)} \leq C t^{-\aldm-1}\exp\n{-\kappa \n{\frac{|x|}{t^{\aldm}}   }^{\frac{2}{2-\alm}} },
    \end{equation}
where $C = C(\lamm,\alm,\kappa)$.
\end{lemma}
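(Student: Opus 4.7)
The plan is to apply Lemma~\ref{lemest} directly with $\mu = 0$ and $n = 1$, since by \eqref{emt} one has $E(x,t) = \Gamma^{0}_{1}(x,t)$. The whole content of this lemma is then a specialisation of the indices in \eqref{gam1}–\eqref{gam3}, combined with an elementary rewriting of the exponential.

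First I would identify the $\eta$- and time-exponents. Because $\mu = 0$ satisfies $-\mu \in \mathbb{N}_{0}$, the parameter $p$ from Lemma~\ref{lemest} equals $n - 2 = -1$, so both $\eta(z,p) = \eta(z,-1)$ and $\eta(z,p+2) = \eta(z,1)$ fall into the branch ``$n < 2$'' and evaluate to $1$. The time exponents simplify as $\mu + \alm(1 - n/2) - 1 = \aldm - 1$ in \eqref{gam1} and $\mu - \alm n/2 - 1 = -\aldm - 1$ in both \eqref{gam2} and \eqref{gam3}, which are exactly the prefactors appearing on the right-hand sides of \eqref{ls1}, \eqref{ls2} and \eqref{ls3}.

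Second, I would rewrite the exponential so that it takes the form required by the statement. With $z = |x|^{2} t^{-\alm}$, a direct computation gives
\begin{align*}
z^{\frac{1}{2-\alm}} = \bigl(|x|^{2} t^{-\alm}\bigr)^{\frac{1}{2-\alm}} = \Bigl(\frac{|x|}{t^{\aldm}}\Bigr)^{\frac{2}{2-\alm}}.
\end{align*}
Plugging this identity together with the simplified polynomial prefactors into \eqref{gam1}, \eqref{gam2} and \eqref{gam3} yields \eqref{ls1}, \eqref{ls2} and \eqref{ls3} respectively. The admissibility condition $\kappa < (2-\alm)\bigl(\alm^{\alm}\lamm/4\bigr)^{1/(2-\alm)}$ coincides exactly with the one required by Lemma~\ref{lemest}, and the constant $C$ depends only on $\lamm$, $\alm$ and $\kappa$ because $\mu=0$ and $n=1$ are now fixed.

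There is essentially no obstacle here: the argument is pure bookkeeping. The only point worth flagging is the verification that we always land in the sub-critical regime $n < 2$ of $\eta$, so that neither the logarithmic nor the power branch contributes — this is automatic both from $n = 1$ and from $p = n - 2 = -1 < 2$.
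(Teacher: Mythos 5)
Your proposal is correct and takes essentially the same route as the paper, which simply notes that the lemma is a direct consequence of Lemma \ref{lemest} via the representation \eqref{emt}; your explicit bookkeeping (setting $\mu=0$, $n=1$, checking $p=n-2=-1$ so that both $\eta$-factors equal $1$, rewriting $z^{\frac{1}{2-\alm}}$ as $\bigl(|x|t^{-\aldm}\bigr)^{\frac{2}{2-\alm}}$, and tracking the constant's dependence) is exactly what the paper leaves implicit.
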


\begin{proof}
It is a straightforward consequence of Lemma \ref{lemest} if we take into account representation \eqref{emt} of kernel $E(x,t)$.
\end{proof}

\begin{lemma}
Let $\kappa < (2-\alm)\n{\frac{\alm^{\alm} \lamm}{4}}^{\frac{1}{2-\alm}}$, $|x|>0$ and $t \in (0,T]$. Then, we have
\begin{equation}\label{ls4}
        \dm{Z(x,t)} \leq C  t^{-\aldm} \exp\n{-\kappa \n{\frac{|x|}{t^{\aldm}}   }^{\frac{2}{2-\alm}} },
    \end{equation}
    \begin{equation}\label{ls5}
        \dm{Z_x(x,t)} \leq C   t^{-\alm} \exp\n{-\kappa \n{\frac{|x|}{t^{\aldm}}   }^{\frac{2}{2-\alm}} },
    \end{equation}
    \begin{equation}\label{ls6}
        \dm{Z_{xx}(x,t)} \leq C   t^{-\trd \alm} \exp\n{-\kappa \n{\frac{|x|}{t^{\aldm}}   }^{\frac{2}{2-\alm}} },
    \end{equation}
    where $C = C(T, \al_1, \ldots, \al_m, \lamm, \kappa)$.
\end{lemma}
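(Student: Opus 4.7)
The plan is to simply plug the representation \eqref{zmt} of $Z$ into the estimates of Lemma \ref{lemest} with $n=1$ and $\mu = 1-\alpha_k$, and then absorb the $k$-dependent time exponents into a constant depending on $T$.

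First I would record the bookkeeping. Since $\alpha_k\in(0,1)$, one has $-\mu=\alpha_k-1\notin\mathbb N_0$, so the parameter $p$ in Lemma \ref{lemest} equals $n$. With $n=1<2$ this means $\eta(z,p)=\eta(z,1)=1$ in \eqref{gam1} and in \eqref{gam4}, while in \eqref{gam2} one has $\eta(z,p+2)=\eta(z,3)=z^{-1/2}$. For the exponential, note that $z^{1/(2-\alpha_m)} = (|x|/t^{\alpha_m/2})^{2/(2-\alpha_m)}$, matching the right-hand sides of \eqref{ls4}--\eqref{ls6}.

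Next, applying \eqref{gam1} with $\mu=1-\alpha_k$, $n=1$ gives
\begin{equation*}
|\Gamma^{1-\alpha_k}_{1}(x,t)|\le C_k\,t^{\alpha_m/2-\alpha_k}\exp\bigl(-\kappa z^{1/(2-\alpha_m)}\bigr).
\end{equation*}
For \eqref{ls5} I would apply \eqref{gam2}: the prefactor is $|x|\,t^{-\alpha_k-\alpha_m/2}z^{-1/2}$, and using $|x|\,z^{-1/2}=t^{\alpha_m/2}$ this simplifies to $t^{-\alpha_k}$. Finally for \eqref{ls6}, since $n=1$ gives $\partial_{xx}=\Delta_x$, I would use the corrected estimate \eqref{gam4} (with $\eta(z,1)=1$), producing $t^{-\alpha_k-\alpha_m/2}$.

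The only remaining step is to replace $-\alpha_k$ in each exponent by the uniform exponent $-\alpha_m$. Because $\alpha_k\le\alpha_m$, for $t\in(0,T]$ we have $t^{\alpha_m-\alpha_k}\le T^{\alpha_m-\alpha_k}$, so each factor $t^{\alpha_m/2-\alpha_k}$, $t^{-\alpha_k}$, $t^{-\alpha_k-\alpha_m/2}$ is controlled by $T^{\alpha_m-\alpha_k}\cdot t^{-\alpha_m/2}$, $T^{\alpha_m-\alpha_k}\cdot t^{-\alpha_m}$, $T^{\alpha_m-\alpha_k}\cdot t^{-3\alpha_m/2}$, respectively. Summing over $k=1,\dots,m$ in \eqref{zmt} and absorbing the coefficients $\lambda_k$, $C_k$ and $T^{\alpha_m-\alpha_k}$ into a single constant $C=C(T,\alpha_1,\dots,\alpha_m,\lambda_m,\kappa)$ yields \eqref{ls4}, \eqref{ls5}, \eqref{ls6}.

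There is no real obstacle here; the argument is a direct consequence of Lemma \ref{lemest} together with the monotone relation between the $\alpha_k$'s. The only point that deserves care is the use of \eqref{gam4} rather than \eqref{gam3} for the second derivative estimate, since \eqref{gam3} would produce the unwanted factor $z^{-1/2}=|x|^{-1}t^{\alpha_m/2}$ and fail to give a pointwise bound independent of $|x|$; this is exactly the situation for which Remark \ref{corest} corrects $(57)$ of \cite{pskhu3}, justifying the dependence of the constant on $T$ and on the smaller exponents $\alpha_1,\dots,\alpha_{m-1}$.
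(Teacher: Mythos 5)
Your proposal is correct and follows essentially the same route as the paper: apply \eqref{gam1}, \eqref{gam2} and the corrected estimate \eqref{gam4} of Lemma \ref{lemest} with $n=1$, $\mu=1-\al_k$ to the representation \eqref{zmt}, then upgrade each $t^{-\al_k}$ to $T^{\al_m-\al_k}t^{-\al_m}$ for $t\in(0,T]$. Your side remark on why \eqref{gam3} would leave an unabsorbable factor $z^{-1/2}=|x|^{-1}t^{\al_m/2}$ (since $-\mu=\al_k-1\notin\N_0$ forces $p=n$) is also accurate and consistent with the paper's use of \eqref{gam4}.
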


\begin{proof}
From \eqref{zmt} and \eqref{gam1} we obtain
\begin{align}\label{num1}
    |Z(x,t)| &\leq \sum_{k=1}^{m} \lambda_k \dm{\Gamma_{1}^{1-\al_k}(x,t)} \leq \sum_{k=1}^{m} \lambda_k C_k t^{-\al_k+\aldm}   \exp\n{-\kappa \n{\frac{|x|}{t^{\aldm}}   }^{\frac{2}{2-\alm}} },
\end{align}
where $C_k = C_k(\al_k,\lamm, \alm, \kappa)$ 
for each $k=1, \ldots, m$. Since $t \in (0,T]$ then 
\begin{align}\label{num2}
    t^{-\al_k} = \n{\frac{t}{T}}^{-\al_k} T^{-\al_k} \leq \n{\frac{t}{T}}^{-\alm} T^{-\al_k} = T^{\alm-\al_k} t^{-\alm}.
\end{align}
Hence, using the estimate \eqref{num2} in \eqref{num1} we get
\begin{align*}
    |Z(x,t)| &\leq \sum_{k=1}^{m} \lambda_k C_k T^{\alm-\al_k} t^{-\aldm}   \exp\n{-\kappa \n{\frac{|x|}{t^{\aldm}}   }^{\frac{2}{2-\alm}} }\\
    &= C t^{-\aldm}   \exp\n{-\kappa \n{\frac{|x|}{t^{\aldm}}   }^{\frac{2}{2-\alm}} },
\end{align*}
where $C = C(T, \al_1, \ldots, \al_m, \lamm, \kappa)$. Thus, we have \eqref{ls4}. Furthermore, from \eqref{zmt} and \eqref{gam2} we have
\begin{align*}
    |Z_x(x,t)| &\leq \sum_{k=1}^{m} \lambda_k \dm{\frac{\partial}{\partial x}\Gamma_{1}^{1-\al_k}(x,t)} \\
    &\leq \sum_{k=1}^{m} \lambda_k C_k |x| t^{-\al_k-\aldm} \n{\frac{|x|^2}{t^{\alm}}}^{-\jd}  \exp\n{-\kappa \n{\frac{|x|}{t^{\aldm}}   }^{\frac{2}{2-\alm}} }\\
    &= \sum_{k=1}^{m} \lambda_k C_k t^{-\al_k}   \exp\n{-\kappa \n{\frac{|x|}{t^{\aldm}}   }^{\frac{2}{2-\alm}} },
\end{align*}
where $C_k = C_k(\al_k,\lamm, \alm, \kappa)$ for each $k=1,\ldots,m$.
Since $t \in (0,T]$ then using \eqref{num2} we get
\begin{align*}
    |Z_x(x,t)| &\leq \sum_{k=1}^{m} \lambda_k C_k T^{\alm-\al_k} t^{-\alm}  \exp\n{-\kappa \n{\frac{|x|}{t^{\aldm}}   }^{\frac{2}{2-\alm}} }\\
    &=C t^{-\alm}  \exp\n{-\kappa \n{\frac{|x|}{t^{\aldm}}   }^{\frac{2}{2-\alm}} },
\end{align*}
where $C = C(T, \al_1, \ldots, \al_m, \lamm, \kappa)$. Thus, we obtain \eqref{ls5}. Finally, from \eqref{zmt} and \eqref{gam4} we have
\begin{align*}
    |Z_{xx}(x,t)| &\leq \sum_{k=1}^{m} \lambda_k \dm{\frac{\partial^2}{\partial x^2}\Gamma_{1}^{1-\al_k}(x,t)} \\
    &\leq \sum_{k=1}^{m} \lambda_k C_k  t^{-\al_k-\aldm}  \exp\n{-\kappa \n{\frac{|x|}{t^{\aldm}}   }^{\frac{2}{2-\alm}} },
\end{align*}
where $C_k = C_k(T,\al_1, \ldots, \al_m,\lamm, \kappa)$ for each $k=1,\ldots,m$. Again using \eqref{num2} we get
\begin{align*}
    |Z_{xx}(x,t)| &\leq C t^{-\trd \alm}  \exp\n{-\kappa \n{\frac{|x|}{t^{\aldm}}   }^{\frac{2}{2-\alm}} },
\end{align*}
where $C = C(T, \al_1, \ldots, \al_m, \lamm, \kappa)$. This gives us \eqref{ls6}.
\end{proof}

\section{Jump relation for integral operator corresponding to kernel E}

In this chapter, we are going to prove Theorem \ref{lemma1}. First, we show an auxiliary lemma that allows us to differentiate under the integral sign in \eqref{qwa}.

\begin{lemma}\label{duis}
    Let $t^{1-\alm}\varphi(t)\in C[0,T]$. Assume that there exists $\beta>\frac{\alm}{2}$ such that $|s(t)- s(\tau)|\leq |t- \tau|^{\beta}$ for $t, \tau \in [0, T]$. Then, for all $t\in (0,T]$ and $x \neq \st$ there holds
\eqq{ \frac{\partial }{\partial x }  \intzt \varphi(\tau) E(x-s(\tau),t-\tau) \dta =   \intzt  \varphi(\tau) E_x(x-s(\tau),t-\tau)\dta .}{lqwa}
\end{lemma}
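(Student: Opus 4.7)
The strategy is a direct application of the Leibniz rule Theorem~\ref{lir} on a small neighborhood of the given point. Fix $t \in (0,T]$ and $x_0 \neq s(t)$, and set $d := \tfrac{1}{2}|x_0 - s(t)| > 0$. On the open interval $U := (x_0 - d/2,\, x_0 + d/2)$ we have $|x - s(t)| \geq d$ for every $x \in U$. I would apply Theorem~\ref{lir} to $f(x,\tau) := \varphi(\tau) E(x - s(\tau), t - \tau)$ with $X = U$ and $\Omega = (0,t)$ equipped with Lebesgue measure. Differentiability of $f$ in $x$ for a.e.\ $\tau$ is immediate from \eqref{ls2}, which gives a pointwise bound on $E_x$ wherever $x \neq s(\tau)$; since $s$ is continuous, the set of $\tau \in (0,t)$ where $x = s(\tau)$ is negligible.

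The key step is to produce a $\tau$-integrable majorant that dominates $|f_x(x,\tau)|$ uniformly for $x \in U$. The hypothesis $t^{1-\alm}\varphi(t) \in C[0,T]$ yields $|\varphi(\tau)| \leq M_\varphi\, \tau^{\alm - 1}$ on $(0,T]$, while the H\"older assumption implies that $s$ is bounded on $[0,T]$ and hence $|x - s(\tau)| \leq M_1$ uniformly for $(x,\tau) \in U \times [0,t]$. Setting $\delta := \min\{t/2,\, (d/2)^{1/\beta}\}$, I would split $(0,t)$ at $t - \delta$ and use \eqref{ls2} on each piece. On $[0, t - \delta]$ the factor $(t-\tau)^{-\alm/2 - 1}$ is bounded by $\delta^{-\alm/2 - 1}$ and the exponential is at most $1$, so $|f_x(x,\tau)| \leq C\, \tau^{\alm - 1}$, which is integrable. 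On $(t - \delta, t)$ the H\"older bound $|s(t) - s(\tau)| \leq (t-\tau)^\beta$ combined with $\delta \leq (d/2)^{1/\beta}$ yields $|x - s(\tau)| \geq d - (t - \tau)^\beta \geq d/2$, whence \eqref{ls2} is dominated by
\begin{equation*}
C\, M_1\, (t-\tau)^{-\alm/2 - 1}\, \exp\!\left(-\kappa\, (d/2)^{2/(2-\alm)}\, (t-\tau)^{-\alm/(2-\alm)}\right),
\end{equation*}
a continuous function on $(t-\delta, t)$ whose super-polynomial decay at $\tau = t$ makes it integrable; on this interval $|\varphi(\tau)|$ is bounded because $\tau$ is bounded away from $0$.

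The main obstacle is precisely verifying that the exponential decay near $\tau = t$ overwhelms the algebraic singularity $(t-\tau)^{-\alm/2 - 1}$ coming from \eqref{ls2}; this is exactly the place where the H\"older continuity of $s$ is used, to keep $|x - s(\tau)|$ uniformly bounded below by $d/2$ as $\tau \uparrow t$, so that the exponent diverges like $(t-\tau)^{-\alm/(2-\alm)}$. Once this uniform majorant is obtained, hypothesis (i) of Theorem~\ref{lir} (integrability of $f(x, \cdot)$ for each fixed $x \in U$) is checked by the same type of splitting using \eqref{ls1} in place of \eqref{ls2}, which is in fact strictly easier because the factor $(t-\tau)^{\alm/2 - 1}$ is already integrable at $\tau = t$. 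The three hypotheses of Theorem~\ref{lir} then hold on $U$, and differentiation under the integral sign gives the conclusion \eqref{lqwa}.
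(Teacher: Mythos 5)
Your proof is correct and follows essentially the same route as the paper: both apply Theorem~\ref{lir}, check integrability of $\varphi(\tau)E(x-s(\tau),t-\tau)$ via \eqref{ls1}, and build the majorant for $\varphi(\tau)E_x(x-s(\tau),t-\tau)$ by splitting $(0,t)$ at $t-\delta$, bounding crudely away from $\tau=t$ and using a lower bound $|x-s(\tau)|\geq \mathrm{const}>0$ near $\tau=t$ so that the exponential in \eqref{ls2} absorbs the singular factor $(t-\tau)^{-\frac{\alm}{2}-1}$ (the paper packages this absorption through \eqref{expe}, you argue the decay directly, and you are in fact more explicit than the paper in working on an open neighbourhood $U$ as Theorem~\ref{lir} requires). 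The only inaccurate remark is that continuity of $s$ makes $\{\tau: s(\tau)=x\}$ negligible --- this is false in general (e.g.\ $s$ could be constant on a subinterval), and hypothesis (2) of Theorem~\ref{lir} anyway needs an exceptional $\tau$-set independent of $x\in U$ --- but it is harmless, since the representation \eqref{fmt} shows $E(\cdot,t-\tau)$ is differentiable in $x$ also at $0$ with vanishing derivative there, so that hypothesis holds for every $\tau$ and your majorant still dominates.
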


\begin{proof}
We use Theorem \ref{lir} in order to prove \eqref{lqwa}. First, we show that $\varphi(\tau) E(x-s(\tau),t-\tau) $ is a~Lebesgue integrable function of $\tau$ for each $t \in (0,T]$ and $x \in \rr$ such that $x \neq \st$. In fact, using \eqref{ls1} and the assumption on $\varphi$, we get
\begin{align*}
    \intzt \dm{\varphi(\tau) E(x-s(\tau),t-\tau)} \dta &\leq C \sup_{t \in (0,T)}|t^{1-\alm}\varphi(t)| \intzt \tau^{\alm-1} \n{t-\tau}^{\aldm-1}  \dta \\
    &= C \sup_{t \in (0,T)}|t^{1-\alm}\varphi(t)| \cdot t^{\trd \alm -1} B\n{\alm, \aldm} < +\infty.
\end{align*}
According to point 3. of Theorem \ref{lir} we must find an integrable function $g: (0,T) \to \rr$ such that 
\begin{align*}
    |\varphi(\tau) E_x(x-s(\tau),t-\tau)| \leq g(\tau) \quad \text{for almost every} \ \tau \in (0,T).
\end{align*}
Since $x \neq \st$, we can set $\omega:= |\st - x| > 0$. From the continuity of function $s$ we know that for all $\ep > 0$ there exists $\delta > 0$ such that for every $\tau \in (t-\delta, t+\delta)$ we have $|\sta - \st| < \ep$.
Hence, 
\begin{itemize}
    \item for $x < \st$ we have $\omega = \st-x$ and $\sta - x > \st - x - \ep = \omega - \ep$,
    \item for $x > \st$ we have $\omega = x-\st$ and $x-\sta > x-\st - \ep = \omega - \ep$.
\end{itemize}
Thus, taking $\ep \in (0,\omega)$, for example $\ep = \frac{\omega}{2}$, we see that there exists $\delta > 0$ such that for all $\tau \in (t-\delta, t+\delta)$ there is $|x-\sta|> \frac{\omega}{2}$. Hence, we can split the following integral into two ones
\begin{align*}
    \frac{\partial }{\partial x }  \intzt  \varphi(\tau) E(x-s(\tau),t-\tau) \dta &= \frac{\partial }{\partial x }  \int_{0}^{t-\delta} \varphi(\tau) E(x-s(\tau),t-\tau) \dta\\
    &\;\;\;+ \frac{\partial }{\partial x }  \int_{t-\delta}^{t} \varphi(\tau) E(x-s(\tau),t-\tau) \dta.
\end{align*}
We construct the majorant separately on both intervals.
\begin{enumerate}
    \item Majorant on $(0,t-\delta)$:
Using \eqref{ls2}, the assumption on $\varphi$ and \eqref{expe} we obtain
\begin{align*}
   &| \varphi(\tau) E_{x}(x-s(\tau) ,t-\tau)| \\
   &\leq  C \sup_{t \in (0,T)}|t^{1-\alm}\varphi(t)| \tau^{\alm-1} \frac{|x-s(\tau) |}{\n{t-\tau}^{\aldm+1}} \exp\n{-\kappa \n{\frac{|x-s(\tau) |}{\n{t-\tau}^{\aldm}}}^{\frac{2}{2-\alm}}} \\
   &\leq C \sup_{t \in (0,T)}|t^{1-\alm}\varphi(t)| \frac{\tau^{\alm-1}}{t-\tau} .
\end{align*}
Hence, let $g_1(\tau) := C \sup_{t \in (0,T)}|t^{1-\alm}\varphi(t)| \frac{\tau^{\alm-1}}{t-\tau}$ be the majorant on the interval $(0,t-\delta)$. Function $g_1$ is integrable on $(0,t-\delta)$, because
\begin{align*}
    \int_{0}^{t-\delta} |g_1(\tau)| \dta &=  C \sup_{t \in (0,T)}|t^{1-\alm}\varphi(t)|  \int_{0}^{t-\delta} \frac{\tau^{\alm-1}}{t-\tau} \dta \\
    &\leq C \sup_{t \in (0,T)}|t^{1-\alm}\varphi(t)|  \int_{0}^{t-\delta} \frac{\tau^{\alm-1}}{\delta} \dta \\
    &=  C \sup_{t \in (0,T)}|t^{1-\alm}\varphi(t)|  \frac{(t-\delta)^{\alm}}{\alm \delta}  \\
    &\leq C \sup_{t \in (0,T)}|t^{1-\alm}\varphi(t)|  \frac{t^{\alm}}{\alm \delta}.
\end{align*}
\item Majorant on $(t-\delta,t)$. Again using \eqref{ls2}, the assumption on $\varphi$ and \eqref{expe} we get
    \begin{align*}
   &| \varphi(\tau) E_{x}(x-s(\tau),t-\tau)| \\
   &\leq  C \sup_{t \in (0,T)}|t^{1-\alm}\varphi(t)| \tau^{\alm-1} \frac{|x-s(\tau) |}{\n{t-\tau}^{\aldm+1}} \exp\n{-\kappa \n{\frac{|x-s(\tau)|}{\n{t-\tau}^{\aldm}}}^{\frac{2}{2-\alm}}} \\
   &=  C \sup_{t \in (0,T)}|t^{1-\alm}\varphi(t)| \tau^{\alm-1} |x-\sta|^{-\frac{1}{\aldm} }  \\
   &\;\;\; \times \n{\frac{|x-s(\tau)|}{\n{t-\tau}^{\aldm}}}^{\frac{\aldm+1}{\aldm}} \exp\n{-\kappa \n{\frac{|x-s(\tau)|}{\n{t-\tau}^{\aldm}}}^{\frac{2}{2-\alm}}} \\
   &\leq C \sup_{t \in (0,T)}|t^{1-\alm}\varphi(t)| \tau^{\alm-1} \frac{1}{|x-\sta|^{\frac{2}{\alm}}     }  \\
   &\leq C \sup_{t \in (0,T)}|t^{1-\alm}\varphi(t)| \tau^{\alm-1} \frac{1}{\n{\frac{\omega}{2}}^{\frac{2}{\alm}}     } .
\end{align*}
Hence, let $g_2(\tau) =  C \sup_{t \in (0,T)}|t^{1-\alm}\varphi(t)| \tau^{\alm-1} \frac{1}{\n{\frac{\omega}{2}}^{\frac{2}{\alm}}     }$. It is integrable on $(t-\delta,t)$, because
\begin{align*}
    \int_{t-\delta}^{t} \tau^{\alm-1}\dta &\leq  \intzt \tau^{\alm-1}\dta = \frac{t^{\alm}}{\alm}.
\end{align*}
\end{enumerate}
Thus, we have an integrable majorant and according to Theorem \ref{lir} we get \eqref{lqwa}.

\end{proof}

The following technical lemma will also be needed in the proof of Theorem \ref{lemma1}.
\begin{lemma}
Let $\delta \in (0,t)$ and $m \in \N$. Then, for any $\lf_1, \ldots, \lf_m > 0$ we have
\begin{equation}\label{Jind}
    \begin{split}
         &\indt S^{0}_m(t-\tau;-\lf_1 ,\ldots, -\lf_m ;-\al_1,\ldots, -\al_m)   d\tau\\
         &= \int_{\delta_m^m}^{\infty}   W\n{-\zeta_m; -\al_m, 1-\al_m  } \times \ldots \times   \int_{\delta_1^m}^{\infty} W\n{-\zeta_1;-\al_1,1-\al_1 } d\zeta_1  \ldots   d\zeta_m ,
\end{split}
\end{equation}
where
\begin{align}\label{deli}
    \delta_i^m := \frac{\lf_i }{\n{\delta - \displaystyle{\sum_{k=i+1}^{m}} \n{\frac{\lf_k }{\zeta_k}}^{\frac{1}{\al_k}} }^{\al_i}   }  \quad \text{for} \ i=1,\ldots,m.
\end{align}
\end{lemma}

\begin{proof}
We prove \eqref{Jind} by induction. \\

\textbf{Base step:} we show \eqref{Jind} for $m=1$. \\
Using representation \eqref{smim} and \eqref{smim1} with $\mu_1 = 0$ we have
\begin{align*}
    &\indt S^{0}_1(t-\tau;-\lf_1;-\al_1)   d\tau = \indt  h_1(t-\tau)  d\tau  \\
 &=  \indt  (t-\tau)^{ - 1} W\n{-\lf_1  (t-\tau)^{-\al_1}; -\al_1, 0  }  d\tau  .
\end{align*}
Applying \eqref{qwj} and substituting the variable $\tau$ with $\zeta_1 = \frac{\lf_1 }{\n{t-\tau}^{\al_1}}$ we obtain
\begin{align*}
    &\indt S^{0}_1(t-\tau;-\lf_1 ;-\al_1)   d\tau  \\
 &=  \indt  \lf_1 \al_1 (t-\tau)^{-\al_1 - 1} W\n{-\lf_1  (t-\tau)^{-\al_1}; -\al_1, 1-\al_1  }  d\tau  \\
    &=  \int_{\frac{\lf_1}{\delta^{\al_1}   }       }^{\infty} W\n{-\zeta_1;-\al_1,1-\al_1 } d\zeta_1 .
\end{align*}
Having in mind notation \eqref{deli} we get \eqref{Jind} for $m=1$. \\

\textbf{Induction step:} we show that for all $m \in \N$ there holds the following implication
\begin{equation}\label{instep}
    \begin{split}
        &\indt S^{0}_m(t-\tau;-\lf_1 ,\ldots, -\lf_m ;-\al_1,\ldots, -\al_m)   d\tau\\
         &= \int_{\delta_m^m}^{\infty}   W\n{-\zeta_m; -\al_m, 1-\al_m  } \times \ldots \times   \int_{\delta_1^m}^{\infty} W\n{-\zeta_1;-\al_1,1-\al_1 } d\zeta_1  \ldots   d\zeta_m\\
         &\implies \indt S^{0}_{m+1}(t-\tau;-\lf_1 ,\ldots, -\lf_{m+1} ;-\al_1,\ldots, -\al_{m+1})   d\tau\\
         &= \int_{\delta_{m+1}^{m+1}}^{\infty}   W\n{-\zeta_{m+1}; -\al_{m+1}, 1-\al_{m+1}  } \times \ldots \times   \int_{\delta_1^{m+1}}^{\infty} W\n{-\zeta_1;-\al_1,1-\al_1 } d\zeta_1  \ldots   d\zeta_{m+1}.
    \end{split}
\end{equation}

Using representation \eqref{smim} and \eqref{smim1} with $\mu_j = 0$ for $j=1,\ldots,m+1$ we have
\begin{align*}
     &  \indt S^{0}_{m+1}(t-\tau;-\lf_1 ,\ldots, -\lf_{m+1} ;-\al_1,\ldots, -\al_{m+1})   d\tau\\
 &=\indt \n{h_1 \ast \ldots \ast h_m \ast h_{m+1}}(t-\tau)   d\tau\\
 &=\indt \int_{0}^{t-\tau} \n{h_1 \ast \ldots \ast h_m}(t-\tau-p_{m+1}) h_{m+1}(p_{m+1}) dp_{m+1}  d\tau\\
 &=\indt \int_{0}^{t-\tau} \n{h_1 \ast \ldots \ast h_m}(t-\tau-p_{m+1}) \\
 &\;\;\;\times p_{m+1}^{ - 1} W\n{-\lf_{m+1} p_{m+1}^{-\al_{m+1}}; -\al_{m+1}, 0  }dp_{m+1}  d\tau.
\end{align*}
Changing the order of integration and substituting $\tau_1 = \tau+p_{m+1} $ we get
\begin{align*}
     &  \indt S^{0}_{m+1}(t-\tau;-\lf_1,\ldots, -\lf_{m+1} ;-\al_1,\ldots, -\al_{m+1})   d\tau\\
 &=\int_{0}^{\delta} p_{m+1}^{ - 1} W\n{-\lf_{m+1}  p_{m+1}^{-\al_{m+1}}; -\al_{m+1}, 0  } \\
 &\;\;\;\times \int_{t-\delta}^{t-p_{m+1}} \n{h_1 \ast \ldots \ast h_m}(t-\tau-p_{m+1})  d\tau dp_{m+1}  \\
  &=\int_{0}^{\delta} p_{m+1}^{ - 1} W\n{-\lf_{m+1}  p_{m+1}^{-\al_{m+1}}; -\al_{m+1}, 0  } \\
 &\;\;\;\times \int_{t-\n{\delta-p_{m+1}}}^{t} \n{h_1 \ast \ldots \ast h_m}(t-\tau_1)  d\tau_1 dp_{m+1} .
\end{align*}
Using the induction hypothesis with $\delta - p_{m+1}$ instead of $\delta$ we obtain
\begin{align*}
     &  \indt S^{0}_{m+1}(t-\tau;-\lf_1,\ldots, -\lf_{m+1} ;-\al_1,\ldots, -\al_{m+1})   d\tau\\
  &=\int_{0}^{\delta} p_{m+1}^{ - 1} W\n{-\lf_{m+1}  p_{m+1}^{-\al_{m+1}}; -\al_{m+1}, 0  } \\
 &\;\;\;\times \int_{\tilde{\delta}_m^m}^{\infty}   W\n{-\zeta_m; -\al_m, 1-\al_m  } \times \ldots \times   \int_{\tilde{\delta}_1^m}^{\infty} W\n{-\zeta_1;-\al_1,1-\al_1 } d\zeta_1  \ldots   d\zeta_m dp_{m+1} ,
\end{align*}
where
\begin{align*}
    \tilde{\delta}_i^m := \frac{\lf_i}{\n{(\delta - p_{m+1})-\displaystyle{\sum_{k=i+1}^{m}} \n{\frac{\lf_k}{\zeta_k}}^{\frac{1}{\al_k}} }^{\al_i}   }   \quad \text{for} \ i=1,\ldots,m.
\end{align*}
Using \eqref{qwj} and then substituting the variable $p_{m+1}$ with $\zeta_{m+1} = \frac{\lf_{m+1} }{p_{m+1}^{\al_{m+1}}}$, we obtain
\begin{align*}
     &  \indt S^{0}_{m+1}(t-\tau;-\lf_1,\ldots, -\lf_{m+1};-\al_1,\ldots, -\al_{m+1})   d\tau\\
  &=\int_{0}^{\delta} \al_{m+1}\lf_{m+1}  p_{m+1}^{-\al_{m+1}-1} W\n{-\lf_{m+1}  p_{m+1}^{-\al_{m+1}}; -\al_{m+1}, 1- \al_{m+1} } \\
 &\;\;\;\times \int_{\tilde{\delta}_m^m}^{\infty}   W\n{-\zeta_m; -\al_m, 1-\al_m  } \times \ldots \times   \int_{\tilde{\delta}_1^m}^{\infty} W\n{-\zeta_1;-\al_1,1-\al_1 } d\zeta_1  \ldots   d\zeta_m dp_{m+1} \\
   &=\int_{\frac{\lf_{m+1} }{\delta^{\al_{m+1}}}      }^{\infty}  W\n{-\zeta_{m+1} ; -\al_{m+1}, 1- \al_{m+1} } \\
 &\;\;\;\times \int_{\delta_m^{m+1}}^{\infty}   W\n{-\zeta_m; -\al_m, 1-\al_m  } \times \ldots \times   \int_{\delta_1^{m+1}}^{\infty} W\n{-\zeta_1;-\al_1,1-\al_1 } d\zeta_1  \ldots   d\zeta_m d\zeta_{m+1},
\end{align*}
because if $\zeta_{m+1} = \frac{\lf_{m+1} }{p_{m+1}^{\al_{m+1}}}$
then for $i=1,\ldots,m$ we have
\begin{align*}
    \tilde{\delta}_i^m &= \frac{\lf_i }{\n{\delta - \n{\frac{\lf_{m+1}}{\zeta_{m+1}}}^{\frac{1}{\al_{m+1}}}  -\displaystyle{\sum_{k=i+1}^{m}} \n{\frac{\lf_k}{\zeta_k}}^{\frac{1}{\al_k}} }^{\al_i}   }   \\
    &=\frac{\lf_i }{\n{\delta  -\displaystyle{\sum_{k=i+1}^{m+1}} \n{\frac{\lf_k }{\zeta_k}}^{\frac{1}{\al_k}} }^{\al_i}   }  = \delta_i^{m+1}.
\end{align*}
Therefore, we get \eqref{instep} and hence we have \eqref{Jind}.
\end{proof}
Now we can proceed to the proof of the jump relation for the integral operator corresponding to the kernel $E(x,t)$. 

\begin{proof}[\textbf{Proof of Theorem \ref{lemma1}}]
By Lemma \ref{duis} we can differentiate under the integral sign, hence for $x \neq \st$
we obtain
\begin{align*}
    \frac{\partial }{\partial x }  \intzt \varphi(\tau) E(x-s(\tau),t-\tau) \dta =   \intzt  \varphi(\tau) E_x(x-s(\tau),t-\tau)\dta .
\end{align*}
Set
\begin{align*}
    L=\intzt \varphi(\tau) E_x(s(t) - s(\tau),t-\tau) \dta - \intzt  \varphi(\tau) E_x(x-s(\tau),t-\tau) \dta.
\end{align*}
Our goal is to show that 
\eqq{\lim_{x\rightarrow s(t)^{\mp}} \n{L\pm \jd \varphi(t)}=0.}{qwh}
For $\de\in (0,t)$, which will be chosen later, we define 
\eqq{
L\equiv  L_{1}+\varphi(t)M+L_{2}\equiv 
}{qek}
\[
=\indt  [\varphi(\tau)- \varphi(t)]\nk{\gdxtta - \gdxxta}\dta
\]
\[
+\varphi(t)M+ \inztd \varphi(\tau) \nk{\gdxtta - \gdxxta}\dta,
\]
where 
\[
M =  \indt \nk{\gdxtta - \gdxxta}\dta.
\]
This decomposition allows us to isolate the main contribution to the jump: the term $\varphi(t) M$ gives rise to the principal jump $\pm \frac{1}{2}\varphi(t)$ in the limit $x \to s(t)^\mp$, while the terms $L_1$ and $L_2$ will be shown to vanish in the same limit. To analyze the contribution of $M$, we further split it into three parts:
\begin{equation}\label{qwi1}
\begin{split}
     M &= \indt \n{ E_x\n{x-\st,t-\tau}-E_x(x-\sta,t-\tau)        } d\tau\\
    &\;\;\;+ \indt  E_x(s(t)-s(\tau),t-\tau) d\tau - \indt E_x(x-s(t),t-\tau) d\tau\\
    &\equiv M_1 + M_2 + J.
\end{split}
\end{equation}
First, we show that 
\eqq{\forall \de\in (0, t) \hd \hd \lim_{x\rightarrow s(t)^{\mp}} J= \mp \jd.}{qwi}
Using the representation \eqref{emt} and \eqref{fmt} and formulas \eqref{smim} and \eqref{smim1} with $\mu_j = 0$ for $j = 1, \ldots, m$ we have 
\begin{equation}\label{ej0}
    \begin{split}
        E(x-\st,t-\tau) &= \n{4\pi}^{-\frac{1}{2}} \intzn p^{-\frac{1}{2}} \exp\n{-\frac{|x-\st|^2}{4p} } \\
    &\;\;\;\times S^{0}_m(t-\tau;-\lambda_1 p,\ldots, -\lambda_m p;-\al_1,\ldots, -\al_m)   dp,
    \end{split}
\end{equation}
where 
\begin{align*}
 S^{0}_m(t-\tau;-\lambda_1 p,\ldots, -\lambda_m p;-\al_1,\ldots, -\al_m) = (h_1 \ast h_2 \ast \ldots \ast h_m)(t-\tau)
\end{align*}
and
\begin{align*}
    h_j(t) = t^{-1} W(-\lambda_j p t^{-\al_j};-\al_j,0) \quad \text{for} \ j=1,\ldots, m.
\end{align*}

Let us calculate $E_x(x-\st,t-\tau)$ for $x \neq \st$. From \eqref{ej0} we have for $x \neq \st$
\begin{equation}\label{ej}
    \begin{split}
        E_x(x-\st,t-\tau) &= \jd (\st-x) \n{4\pi}^{-\frac{1}{2}} \intzn p^{-\trd} \exp\n{-\frac{|x-\st|^2}{4p} }  \\
    &\;\;\;\times S^{0}_m(t-\tau;-\lambda_1 p,\ldots, -\lambda_m p;-\al_1,\ldots, -\al_m)   dp.
    \end{split}
\end{equation}
Hence, we have 
\begin{align*}
 J &= \jd (x-\st) \indt \n{4\pi}^{-\jd} \intzn p^{-\trd} \exp\n{-\frac{|x-\st|^2}{4p} } \\   
 &\;\;\; \times S^{0}_m(t-\tau;-\lambda_1 p,\ldots, -\lambda_m p;-\al_1,\ldots, -\al_m)   dp d\tau.
\end{align*}
Changing the order of integration we obtain
\begin{align*}
 J &= \jd (x-\st)  \n{4\pi}^{-\jd} \intzn p^{-\trd} \exp\n{-\frac{|x-\st|^2}{4p} } \\   
 &\;\;\; \times \indt S^{0}_m(t-\tau;-\lambda_1 p,\ldots, -\lambda_m p;-\al_1,\ldots, -\al_m)   d\tau dp.
\end{align*}
Changing the variable $p \rightarrow z = \frac{p}{|x-\st|^2}$ we get
\begin{equation}\label{ejj}
\begin{split}
     J &= \jd \sign(x-\st)  \n{4\pi}^{-\jd} \intzn z^{-\trd}  \exp\n{-\frac{1}{4z} } \\   
 &\;\;\; \times \indt S^{0}_m(t-\tau;-\lambda_1 z|x-\st|^2,\ldots, -\lambda_m z|x-\st|^2;-\al_1,\ldots, -\al_m)   d\tau  dz .
\end{split}
\end{equation}

Using \eqref{Jind}, with $\lf_i = \lambda_i z|x-\st|^2$ for $i = 1, \ldots m$, in \eqref{ejj} we obtain
\begin{equation}\label{qwj1}
    \begin{split}
         J &= \jd  \sign(x-\st)    \n{4\pi}^{-\jd} \intzn z^{-\trd}  \exp\n{-\frac{1}{4z} } \\
         &\;\;\;\times \int_{\delta_m^m}^{\infty}   W\n{-\zeta_m; -\al_m, 1-\al_m  } \times \ldots \times  \int_{\delta_1^m}^{\infty} W\n{-\zeta_1;-\al_1,1-\al_1 } d\zeta_1 \ldots   d\zeta_m  d z.
\end{split}
\end{equation}
Let us recall (see \eqref{deli}) that $\delta_i^m$ depends on $x$ for $i=1,\ldots,m$. Using dominated convergence theorem we can go to the limit with $x \to \st^{\mp}$ under the integral sign in \eqref{qwj1}. Indeed, using \eqref{qwl1}, the fact that $\delta_i^m > 0$ for $i=1,\ldots,m$ and \eqref{qwl} we have 
\begin{align*}
    &\dm{ \int_{\delta_m^m}^{\infty}   W\n{-\zeta_m; -\al_m, 1-\al_m  } \times \ldots \times   \int_{\delta_1^m}^{\infty} W\n{-\zeta_1;-\al_1,1-\al_1 } d\zeta_1  \ldots   d\zeta_m} \\
    &\leq \int_{0}^{\infty}  W\n{-\zeta_m; -\al_m, 1-\al_m  } \times \ldots \times   \int_{0}^{\infty} W\n{-\zeta_1;-\al_1,1-\al_1 } d\zeta_1  \ldots   d\zeta_m \\
    &=1.
\end{align*}
Thus, we may consider the function $g(z) := z^{-\trd} \exp 
\n{- \frac{1}{4z}   }  $ as the majorant, which is integrable since
\begin{equation}\label{qwl2}
\begin{split}
    \intzn z^{-\trd} \exp \n{- \frac{1}{4z}   }  d z &\stackrel{q = \frac{1}{4z}}{=} \intzn 4^{\trd} q^{\trd} e^{-q} \frac{1}{4q^2} dq \\
&=  2\intzn q^{-\jd} e^{-q}  dq= 2 \Gamma\n{\jd} = 2\sqrt{\pi} .
\end{split}
\end{equation}
Moreover, using the fact that $\lim_{x \to\st} \delta_i^m = 0$ for $i=1,\ldots,m$ and the absolute continuity of Lebesgue integral we have
\begin{align*}
    &\lim_{x \to \st} \int_{\delta_m^m}^{\infty}   W\n{-\zeta_m; -\al_m, 1-\al_m  } \times \ldots \times   \int_{\delta_1^m}^{\infty} W\n{-\zeta_1;-\al_1,1-\al_1 } d\zeta_1  \ldots   d\zeta_m \\
    &= \int_{0}^{\infty}  W\n{-\zeta_m; -\al_m, 1-\al_m  } \times \ldots \times   \int_{0}^{\infty} W\n{-\zeta_1;-\al_1,1-\al_1 } d\zeta_1  \ldots   d\zeta_m  = 1.
\end{align*}
Hence, going to the limit with $x \to \st^{\mp}$ in \eqref{qwj1} we obtain
\begin{align*}
 \lim_{x \to \st^{\mp}} J &= \mp \jd     \frac{1}{\sqrt{4\pi}} \intzn z^{-\trd} \exp \n{- \frac{1}{4z}   }  d z = \mp \jd,
\end{align*}
where we have used \eqref{qwl2}. Thus, we have \eqref{qwi}. In our further calculations we need the following estimate 
\begin{align}\label{ej1}
    \indt \dm{E_x(x-\st,t-\tau)} \dta \leq \jd.
\end{align}
Using the representation \eqref{ej} and the property \eqref{ej01} we have
\begin{equation*}
    \begin{split}
         \dm{E_x(x-\st,t-\tau)}   &= \jd |\st-x| \frac{1}{\sqrt{4\pi}} \intzn p^{-\trd} \exp\n{-\frac{|x-\st|^2}{4p} }  \\
    &\;\;\;\times S^{0}_m(t-\tau;-\lambda_1 p,\ldots, -\lambda_m p;-\al_1,\ldots, -\al_m)   dp.
    \end{split}
\end{equation*}
Thus, we see that
\begin{align*}
    \dm{E_x(x-\st,t-\tau)} = \sign(\st-x) \cdot E_x(x-\st,t-\tau).
\end{align*}
Hence, using the above equality, the definition \eqref{qwi1} of $J$ and the representation \eqref{qwj1} we have
\begin{align*}
     \indt \dm{E_x(x-\st,t-\tau)} \dta &= \sign(\st-x)  \indt E_x(x-\st,t-\tau) \dta \\
     &= -\sign(\st-x) \cdot J = \sign(x-\st) \cdot J = |J|.
\end{align*}
Therefore, using again \eqref{qwj1}, we can estimate
\begin{align*}
     &\indt \dm{E_x(x-\st,t-\tau)} \dta  \\
     &\leq   \jd     \frac{1}{\sqrt{4\pi}} \intzn z^{-\trd} \exp \n{- \frac{1}{4z}   } \int_{0}^{\infty}   W\n{-\zeta_m; -\al_m, 1-\al_m  }  \\
 &\;\;\;\times \ldots \times \int_{ 0   }^{\infty}  W\n{-\zeta_1;-\al_1,1-\al_1 } d\zeta_1  \ldots  d\zeta_m  d z \\
 &=  \jd     \frac{1}{\sqrt{4\pi}} \intzn z^{-\trd} \exp \n{- \frac{1}{4z}   }  d z = \jd,
\end{align*}
where we also used \eqref{qwl} and \eqref{qwl2}. Thus, we get \eqref{ej1}. In the next step, we shall deal with $M_{1}$ and $M_{2}$. Having in mind \eqref{qwi1}, we have
\begin{align*}
    M_{1} &= \indt \n{ E_x(x-\st,t-\tau)-E_x(x-\sta,t-\tau)} d\tau \\
 &= \indt  \int_{0}^{1} E_{xx}( x-\sta + \mu(\sta-\st), t-\tau) d\mu (\sta - \st) d\tau .
\end{align*}
Using \eqref{ls3} and the assumption on $s$ we obtain
\begin{align*}
    | M_{1}| &\leq C \indt (t-\tau)^{-\aldm-1} |\sta-\st| d\tau \\
    &\leq C \int_{t-\delta}^{t} (t-\tau)^{\beta-\aldm-1}  d\tau \leq C \delta^{\beta-\aldm}.
\end{align*}
Therefore, from the above estimate we obtain
\eqq{|M_{1}|\leq C\de^{\beta- \aldm}}{qec}
and indirectly we get the inequality
\begin{equation}\label{qecc}
        \indt \dm{ E_x(x-\st,t-\tau)-E_x(x-\sta,t-\tau)} d\tau \leq  C\de^{\beta- \aldm},
\end{equation}
which will be needed below.
Using \eqref{ls2} and the h\"older continuity of $s$ we get the estimate for $M_{2}$ 
\begin{align*}
    |M_2| &\leq \indt |E_x(s(t)-s(\tau),t-\tau)|    d\tau \leq C \indt  |\st-\sta| \cdot (t-\tau)^{-\aldm-1}  d\tau \\
    &\leq C \indt (t - \tau)^{\beta - \aldm -1 }\dta = C\de^{\beta - \aldm}.
\end{align*}
Thus, we obtain
\begin{align}\label{qed}
     |M_2| \leq C\de^{\beta - \aldm}
\end{align}
and indirectly we have
\begin{align}\label{qedd}
     \indt |E_x(s(t)-s(\tau),t-\tau)|    d\tau  \leq C\de^{\beta - \aldm}.
\end{align}
Combining \eqref{qwi}, \eqref{qec} and \eqref{qed} we have
\begin{align}\label{me}
    \lim_{x \to \st^{\mp}} \dm{ M  \pm \jd   } \leq C \de^{\beta - \aldm}.
\end{align}
Let us deal with $L_1$. Recalling \eqref{qek} we get
\begin{align*}
    L_1 &=  \indt  [\varphi(\tau)- \varphi(t)]\nk{\gdxtta - \gdxxta}\dta\\
    &=\indt [\varphi(\tau)- \varphi(t)] \bigg{[} \Big{(} E_x\n{x-\st,t-\tau} - E_x(x-\sta,t-\tau)   \Big{)}     \\
    &\;\;\;+  E_x(s(t)-s(\tau),t-\tau) - E_x(x-s(t),t-\tau) \bigg{]} d\tau .
\end{align*}
Hence, we obtain
\begin{align*}  
   |L_1| &\leq \sup_{t-\delta < \tau < t} |\varphi(\tau)- \varphi(t)| \bigg{(} \indt \dm{E_x\n{x-\st,t-\tau} - E_x(x-\sta,t-\tau) } \dta\\
   &\;\;\;+ \indt \dm{E_x(s(t)-s(\tau),t-\tau)} \dta + \indt \dm{E_x(x-s(t),t-\tau)} \dta \bigg{)}.
\end{align*}
Using \eqref{ej1}, \eqref{qecc} and \eqref{qedd} we get
\begin{align}\label{qed2}
   \lim_{x \to \st} |L_1| &\leq C\n{\de^{\beta - \aldm} + 1 } \sup_{t-\delta < \tau < t} |\varphi(\tau)- \varphi(t)|.
\end{align}
Finally, to estimate $L_{2}$ we write 
\[
L_{2}= \inztd \varphi(\tau)\int_{0}^{1} E_{xx}(x - \sta + \mu(\st-x), t- \tau)d\mu (\st-x)\dta .
\]
Next, applying \eqref{ls3}, we obtain 
\begin{equation*}
    \begin{split}
  |L_{2}| &\leq C \sup_{t \in (0,T)}|t^{1-\alm} \varphi(t)| \inztd \tau^{\alm-1} (t-\tau)^{-\aldm-1} \dta |x- \st | \\
  &\leq C \sup_{t \in (0,T)}|t^{1-\alm} \varphi(t)| \de^{- \aldm -1}t^{\alm} |x- \st |.
    \end{split}
\end{equation*}
Thus, we get
\begin{align}\label{qeh1}
    \lim_{x \to \st} L_2 = 0.
\end{align}
Combining \eqref{me}, \eqref{qed2} and \eqref{qeh1} we have
\begin{align*}
     \lim_{x \to \st^{\mp}} \dm{L_1 + \varphi(t) M + L_2 \pm \jd \varphi(t)} \leq C\n{\de^{\beta - \aldm} + 1 } \sup_{t-\delta < \tau < t} |\varphi(\tau)- \varphi(t)| + C\de^{\beta - \aldm} .
\end{align*}
Since the left-hand side is independent of $\delta$, and since the right-hand side can be made arbitrary small if $\de$ is sufficiently small, we get \eqref{qwh}. 

\end{proof}

\section{Continuity of the integral operator corresponding to the kernel Z}

In this chapter, we prove Theorem \ref{lemma2}. First, we show an auxiliary lemma that allows us to differentiate under the integral sign in \eqref{qwa2}.

\begin{lemma}\label{duis2}
    Let $t^{1-\alm} \varphi(t)\in C[0,T]$. Assume that there exists $\beta>\frac{\alm}{2}$ such that $|s(t)- s(\tau)|\leq |t- \tau|^{\beta}$ for $t, \tau \in [0, T]$. Then, for each $t\in (0,T]$ and $x \neq \st$ there holds
\eqq{ \frac{\partial }{\partial x }  \intzt \varphi(\tau) Z(x-s(\tau),t-\tau) \dta =   \intzt  \varphi(\tau) Z_x(x-s(\tau),t-\tau)\dta .}{lqwa2}
\end{lemma}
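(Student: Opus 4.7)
The plan is to verify the three hypotheses of Theorem \ref{lir} for the integrand $f(x,\tau) := \varphi(\tau) Z(x-\sta, t-\tau)$ on a small neighbourhood $X$ of any fixed point $x_0 \neq \st$ and $\Omega = (0,t)$ with Lebesgue measure. The structure mirrors Lemma \ref{duis}, with \eqref{ls4} and \eqref{ls5} replacing the $E$-kernel estimates. In fact, the temporal singularity $(t-\tau)^{-\alm}$ carried by $Z_x$ is milder than the $(t-\tau)^{-\aldm-1}$ appearing in $E_x$, so the majorant near $\tau=t$ is no more delicate to construct than in the $E$-case.

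Conditions (1) and (2) of Theorem \ref{lir} dispatch quickly. Using \eqref{ls4} and the hypothesis $t^{1-\alm}\varphi(t)\in C[0,T]$,
\begin{align*}
\intzt \dm{\varphi(\tau) Z(x-\sta, t-\tau)}\dta \leq C \sup_{r\in (0,T)}|r^{1-\alm}\varphi(r)| \intzt \tau^{\alm-1}(t-\tau)^{-\aldm}\dta,
\end{align*}
and the right-hand side is a finite Beta integral since $\alm>0$ and $\aldm<1$. For (2), continuity of $s$ at $t$ forces $x\neq\sta$ for all $x\in X$ and all $\tau$ sufficiently close to $t$, and $Z(\cdot,\cdot)$ is smooth off the origin, so $\partial_x f(x,\tau) = \varphi(\tau) Z_x(x-\sta, t-\tau)$ exists pointwise.

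The substantive step is (3), constructing a $\tau$-integrable majorant for $|\partial_x f(x,\tau)|$ valid uniformly in $x\in X$. Set $\omega := |x_0 - \st| > 0$; by continuity of $s$ I would choose $\de\in(0,t)$ and shrink $X$ so that $|x-\sta|>\omega/3$ for every $x\in X$ and $\tau\in(t-\de,t)$. I then split the integration at $t-\de$. On $(0,t-\de)$ the factor $(t-\tau)^{-\alm}$ is bounded by $\de^{-\alm}$ and the exponential in \eqref{ls5} by $1$, which gives the integrable majorant $\theta_1(\tau) = C\de^{-\alm}\tau^{\alm-1}\sup_r|r^{1-\alm}\varphi(r)|$. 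On $(t-\de,t)$ I would absorb the temporal blow-up into the exponential: with $u := |x-\sta|/(t-\tau)^{\aldm}$ one has the algebraic identity $(t-\tau)^{-\alm} = u^{2}|x-\sta|^{-2}$, and then \eqref{expe} with $\gamma=2$ supplies $u^{2}\exp(-\kappa u^{2/(2-\alm)}) \leq C$. Combined with $|x-\sta|>\omega/3$ this yields the integrable majorant $\theta_2(\tau) = (9C/\omega^{2})\tau^{\alm-1}\sup_r|r^{1-\alm}\varphi(r)|$.

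The main obstacle is precisely this exponential-absorption step on $(t-\de,t)$: the temporal blow-up $(t-\tau)^{-\alm}$ is traded for a spatial factor $|x-\sta|^{-2}$, which is harmless because $x$ is uniformly separated from the moving boundary. Piecing the two majorants together into one dominating function and applying Theorem \ref{lir} yields \eqref{lqwa2}. I note in passing that only continuity of $s$ at $t$ is actually used here; the H\"older condition in the hypothesis is kept purely for consistency with Lemma \ref{duis}.
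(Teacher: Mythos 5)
Your argument is correct and rests on the same framework as the paper's proof (Theorem \ref{lir} combined with the kernel bounds \eqref{ls4}--\eqref{ls5}), but the key step --- the construction of the integrable majorant --- is done by a genuinely different, and more elaborate, route. You import the machinery of Lemma \ref{duis}: split the time interval at $t-\de$, separate $x$ from the moving boundary via $|x-\sta|>\omega/3$ near $\tau=t$, and trade the factor $(t-\tau)^{-\alm}$ for $|x-\sta|^{-2}$ through the identity $(t-\tau)^{-\alm}=u^{2}|x-\sta|^{-2}$ and \eqref{expe}. This is valid, but it is exactly the device that is \emph{forced} in the $E$-case, where the bound $(t-\tau)^{-\aldm-1}$ is non-integrable at $\tau=t$. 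Here it is superfluous: since $\alm<1$, discarding the exponential in \eqref{ls5} already gives
\begin{align*}
|\varphi(\tau)Z_x(x-\sta,t-\tau)|\leq C\sup_{r\in(0,T)}|r^{1-\alm}\varphi(r)|\,\tau^{\alm-1}(t-\tau)^{-\alm},
\end{align*}
which is a single majorant, uniform in $x$, integrable over all of $(0,t)$ (a Beta integral $B(\alm,1-\alm)$). This is precisely the paper's proof: no splitting, no neighbourhood of $x_0$, no separation from the boundary, and no property of $s$ beyond measurability of the integrand is ever invoked --- which also confirms, and slightly strengthens, your closing observation that the H\"older hypothesis on $s$ plays no role in this lemma. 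What your extra machinery buys is nothing here; what it costs is dependence of the majorant on $\omega$ and $\de$, which the simpler global bound avoids.

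One caveat, which you share with the paper's own wording of hypothesis (2): your justification that $\partial_x f(x,\tau)$ exists for all $x\in X$ covers only $\tau$ close to $t$. For $\tau\in(0,t-\de)$ it can happen that $s(\tau)\in X$, and at $x=s(\tau)$ the kernel $Z(\cdot,t-\tau)$ has a corner at the origin (its first-order term in $|x|$ does not vanish, which is why \eqref{ls4}--\eqref{ls6} are stated for $|x|>0$), so $f_x$ need not exist there and condition (2) of Theorem \ref{lir}, as stated, is not literally verified. The cleanest repair is to note that \eqref{ls5} bounds the one-sided derivatives uniformly, hence the difference quotients are dominated by $C\tau^{\alm-1}(t-\tau)^{-\alm}$, and to conclude by dominated convergence at the single point $x_0$, where $x_0\neq\st$ and $x_0=s(\tau)$ only on a set of $\tau$ that can be handled by the same Lipschitz bound.
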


\begin{proof}
We are going to use Theorem \ref{lir} in order to prove \eqref{lqwa2}. First, we show that $\varphi(\tau) Z(x-s(\tau),t-\tau)$ is a Lebesgue integrable function of $\tau$ for each $t \in (0,T]$ and $x \in \rr$ such that $x \neq \st$. In fact, using \eqref{ls4} and the assumption on $\varphi$, we get
\begin{align*}
    \intzt \dm{\varphi(\tau) Z(x-s(\tau),t-\tau)} \dta &\leq C \sup_{t \in (0,T)}|t^{1-\alm}\varphi(t)| \intzt \tau^{\alm-1} \n{t-\tau}^{-\aldm}  \dta \\
   &=  C \sup_{t \in (0,T)}|t^{1-\alm}\varphi(t)| \cdot t^{\aldm} B\n{\alm, 1-\aldm}.
\end{align*}

Moreover, it is obvious that for almost all $\tau \in (0,T)$ the partial derivative $\varphi(\tau) Z_x(x-s(\tau),t-\tau)$ exists for all $x \in \rr$ and $x \neq s(t)$. According to point 3. of Theorem \ref{lir} we must find an integrable function $g: (0,T) \to \rr$ such that 
\begin{align*}
    |\varphi(\tau) Z_x(x-s(\tau),t-\tau)| \leq g(\tau) \quad \text{for almost every} \ \tau \in (0,T).
\end{align*}
Using \eqref{ls5}, we have
\begin{align*}
    | \varphi(\tau) Z_{x}(x-s(\tau) ,t-\tau)| \leq C \sup_{t \in (0,T)}|t^{1-\alm}\varphi(t)| \cdot \tau^{\alm-1} (t-\tau)^{-\alm}.
\end{align*}
Hence, let $g(\tau):= C \sup_{t \in (0,T)}|t^{1-\alm}\varphi(t)| \cdot \tau^{\alm-1} (t-\tau)^{-\alm}$ be the majorant for the function $\varphi(\tau) Z_x(x-\sta,t-\tau)$. Let us show that $g$ is integrable in $(0,t)$
\begin{align*}
     \intzt  |g(\tau)| \dta &=  C \sup_{t \in (0,T)}|t^{1-\alm}\varphi(t)| \intzt \tau^{\alm-1} (t-\tau)^{-\alm} \dta\\
     &=  C \sup_{t \in (0,T)}|t^{1-\alm}\varphi(t)| B\n{\alm,1-\alm} < \infty.
\end{align*}
Thus, we have an integrable majorant and according to Theorem \ref{lir} we get \eqref{lqwa2}.

\end{proof}

Now we can proceed to the proof of continuity of the integral operator corresponding to the kernel $Z$. 

\begin{proof}[\textbf{Proof of Theorem \ref{lemma2}}]
Fix any $t \in (0,T]$ and $\ep > 0$. Let $\delta \in (0,t)$ be chosen below. According to Lemma \ref{duis2}, for any $x \neq \st$, we find
\begin{align*}
    \frac{\partial }{\partial x }  \intzt \varphi(\tau) Z(x-s(\tau),t-\tau) \dta =   \intzt  \varphi(\tau) Z_x(x-s(\tau),t-\tau)\dta .
\end{align*}
We want to show that
\begin{align}\label{lm2}
   \lim_{x \to s(t)} \n{ \intzt \varphi(\tau) Z_{x}(x-\sta,t-\tau)d\tau - \intzt \varphi(\tau) Z_{x}(\st-\sta,t-\tau)d\tau} = 0.
\end{align}
We have
\begin{align*}
    &\intzt \varphi(\tau) Z_{x}(x-\sta,t-\tau)d\tau - \intzt \varphi(\tau) Z_{x}(\st-\sta,t-\tau)d\tau\\
    &= \indt \varphi(\tau) \n{Z_{x}(x-\sta,t-\tau)- Z_{x}(\st-\sta,t-\tau) }d\tau\\
    &\;\;\;+ \inztd \varphi(\tau) \n{ Z_{x}(x-\sta,t-\tau) - Z_{x}(\st-\sta,t-\tau)       } d\tau\\
    &= Z_1 + Z_2.
\end{align*}
Using \eqref{ls5} we can estimate
\begin{align*}
    |Z_1| &\leq \indt |\varphi(\tau)| \n{|Z_{x}(x-\sta,t-\tau)|+ |Z_{x}(\st-\sta,t-\tau)| }d\tau\\
    &\leq C  \sup_{t\in (0,T)} |t^{1-\alm} \varphi(t)| \indt \tau^{\alm-1} (t-\tau)^{-\alm} d\tau\\
    &\leq C  \sup_{t\in (0,T)} |t^{1-\alm} \varphi(t)| \cdot (t-\delta)^{\alm-1}  \indt (t-\tau)^{-\alm} d\tau\\
    &=C  \sup_{t\in (0,T)} |t^{1-\alm} \varphi(t)| \cdot (t-\delta)^{\alm-1} \frac{\delta^{1-\alm}}{1-\alm} \\
    &=  C  \sup_{t\in (0,T)} |t^{1-\alm} \varphi(t)| \cdot  \frac{\delta^{1-\alm}}{(t-\delta)^{1-\alm}} \leq \frac{\ep}{2}
\end{align*}
for suffciently small $\delta$. Henceforth $\delta$ is fixed. In order to estimate $Z_2$ we write
\begin{align*}
  Z_2 = \inztd \varphi(\tau) \int_{0}^{1}  Z_{xx}(\mu(x-\sta) + (1-\mu)(\st-\sta),t-\tau)   d\mu (x-\st)d\tau 
\end{align*}
and using \eqref{ls6} we can estimate
\begin{align*}
    |Z_2| &\leq C \sup_{t\in (0,T)} |t^{1-\alm} \varphi(t)| \inztd  \tau^{\alm-1}(t-\tau)^{-\trd \alm} d\tau |x-\st|\\
   &\leq C  \sup_{t\in (0,T)} |t^{1-\alm} \varphi(t)| \cdot \delta^{-\trd \alm} \inztd  \tau^{\alm-1} d\tau  |x-\st|\\
    &= C \sup_{t\in (0,T)} |t^{1-\alm} \varphi(t)|  \cdot \delta^{-\trd \alm}  \frac{(t-\delta)^{\alm}}{\alm}  |x-\st|\\
     &\leq C  \sup_{t\in (0,T)} |t^{1-\alm} \varphi(t)|  \frac{t^{\alm}}{\delta^{\trd \alm}}  |x-\st| \leq \frac{\ep}{2}
\end{align*}
for all $x$ such that the difference $|x-\st|$ is sufficiently small.
Thus, we obtain \eqref{lm2}.
\end{proof}

\section{Acknowledgments}

I am deeply grateful to Prof. Mykola Krasnoshock for the inspiring idea that helped me to overcome a difficulty in one of the proofs. I also wishes to thank Prof. Arsen Pskhu for helpful clarifications regarding his work. Finally, I would like to thank Prof. Adam Kubica for his valuable remarks and feedback.

 \end{document}